\documentclass[11pt]{article}

\usepackage{amssymb,amsmath,amsfonts,amsthm}
\usepackage{latexsym}
\usepackage{graphics}
\usepackage{indentfirst}

\setlength{\textwidth}{15.5cm} \setlength{\headheight}{0.5cm} \setlength{\textheight}{21.5cm}
\setlength{\oddsidemargin}{0.25cm} \setlength{\evensidemargin}{0.25cm} \setlength{\topskip}{0.5cm}
\setlength{\footskip}{1.5cm} \setlength{\headsep}{0cm} \setlength{\topmargin}{0.5cm}

\newenvironment{customthm}[1]
  {\innercustomthm}
  {\endinnercustomthm}

\newenvironment{customcor}[1]
  {\innercustomcor}
  {\endinnercustomcor}

\newtheorem*{thm*}{Theorem}
\newtheorem*{pro*}{Proposition}
\newtheorem{thm}{Theorem}

\newtheorem{pro}[thm]{Proposition}
\newtheorem{obs}[thm]{Observation}
\newtheorem{cor}[thm]{Corollary}

\newcommand{\N}{\mathbb{N}}
\newcommand{\Z}{\mathbb{Z}}

\newcommand{\R}{\mathbb{R}}

\newcommand{\diff}{\mathrm{diff}}

\newcommand{\col}{\mathrm{col}}

\begin{document}

\title{Combinatorial Nullstellensatz and DP-coloring of Graphs}

\author{Hemanshu Kaul\footnote{Department of Applied Mathematics, Illinois Institute of Technology, Chicago, IL 60616. E-mail: {\tt kaul@iit.edu}} \\
Jeffrey A. Mudrock\footnote{Department of Mathematics, College of Lake County, Grayslake, IL 60030. E-mail: {\tt jmudrock@clcillinois.edu}}  }

\maketitle

\begin{abstract}

\medskip

We initiate the study of applying the Combinatorial Nullstellensatz to the DP-coloring of graphs even though, as is well-known, the Alon-Tarsi theorem does not apply to DP-coloring. We define the notion of \emph{good covers of prime order} which allows us to apply the Combinatorial Nullstellensatz to DP-coloring. We apply these tools to DP-coloring of the cones of certain bipartite graphs and uniquely 3-colorable graphs. We also extend a result of Akbari, Mirrokni, and Sadjad (2006) on unique list colorability to the context of DP-coloring. We establish a sufficient algebraic condition for a graph $G$ to satisfy $\chi_{DP}(G) \leq 3$, and we completely determine the DP-chromatic number of squares of all cycles.

\noindent {\bf Keywords.}  Graph coloring, List coloring, Alon-Tarsi Number, Combinatorial Nullstellensatz, DP-coloring.

\noindent \textbf{Mathematics Subject Classification.} 05C15, 05C25, 05C31, 05C69

\end{abstract}

\section{Introduction}\label{intro}

We only consider nonempty, finite, undirected, simple graphs unless otherwise noted. Generally speaking we follow West~\cite{W01} for terminology and notation.  The set of natural numbers is $\N = \{1,2,3, \ldots \}$.  Given a set $A$, $\mathcal{P}(A)$ is the power set of $A$.  For $m \in \N$, we write $[m]$ for the set $\{1, \ldots, m \}$.  If $G$ is a graph and $S, U \subseteq V(G)$, we use $G[S]$ for the subgraph of $G$ induced by $S$, and we use $E_G(S, U)$ for the subset of $E(G)$ with an endpoint in $S$ and an endpoint in $U$.  If an edge in $E(G)$ connects the vertices $u$ and $v$, the edge can be represented by $uv$ or $vu$.  We use $\alpha(G)$ for the size of the largest independent set in $G$.  For $v \in V(G)$, we write $d_G(v)$ for the degree of vertex $v$ in the graph $G$, and we use $\Delta(G)$ and $\delta(G)$ for the maximum and minimum degree of a vertex in $G$ respectively.  If $G_1$ and $G_2$ are vertex disjoint graphs, we write $G_1 \vee G_2$ for the join of $G_1$ and $G_2$. Also, $G^k$ denotes the $k^{th}$ power of graph $G$ (i.e., $G^k$ has the same vertex set as $G$ and edges between any two vertices within distance $k$ in $G$).

If $D$ is a simple directed graph (i.e., digraph), we view the edge set of $D$ as a subset of $V(D) \times V(D)$.  When $(u,v) \in E(D)$ we say that $u$ is the \emph{tail} of the edge and $v$ is the \emph{head} of the edge.  For $v \in V(D)$ we use $d_G^{+}(v)$ for the outdegree of $v$, and we use $d_G^{-}(v)$ for the indegree of $v$.  An orientation of an undirected graph $G$ is a directed graph $D$ with $V(D) = V(G)$ and each edge in $E(D)$ is formed as follows: for each $uv \in E(G)$, exactly one of the ordered pairs: $(u,v)$ or $(v,u)$ is put in $E(D)$.

\subsection{Combinatorial Nullstellensatz and its Application to List Coloring}

The focus of this paper is to apply the Combinatorial Nullstellensatz to DP-coloring of graphs, a far-reaching and insightful generalization of list coloring of graphs. In this subsection we review the Combinatorial Nullstellensatz and briefly overview how it has been applied to list coloring problems.

Suppose that $\mathbb{F}$ is a field.  We use $\mathbb{F}[x_1, \ldots, x_n]$ for the ring of polynomials in $n$ variables with coefficients in $\mathbb{F}$.  If $p(x_1, \ldots, x_n) \in \mathbb{F}[x_1, \ldots, x_n]$, then we use $[ \prod_{i=1}^n x_i^{t_i}]_p $ to denote the element in $\mathbb{F}$ that is the coefficient of the term $\prod_{i=1}^n x_i^{t_i}$ in the expanded form of $p(x_1, \ldots, x_n)$.  We are now ready to state the Combinatorial Nullstellensatz.

\begin{thm} [Combinatorial Nullstellensatz~\cite{A99}] \label{thm: combnull}
Suppose $\mathbb{F}$ is a field and $f \in \mathbb{F}[x_1, \ldots, x_n]$.  If $f$ has degree $\sum_{i=1}^n t_i$, $[\prod_{i=1}^n x_i^{t_i}]_f \neq 0$, and $S_1, \ldots, S_n$ are subsets of $\mathbb{F}$ with $|S_i| > t_i$ for each $i \in [n]$, then there is an $(s_1, \ldots, s_n) \in \prod_{i=1}^n S_i$ such that $f(s_1, \ldots, s_n) \neq 0$ \footnote{Note that when we evaluate $f(s_1, \ldots, s_n)$, addition and multiplication are performed in $\mathbb{F}$.}.
\end{thm}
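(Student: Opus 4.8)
The plan is to argue by contradiction: assume that $f(s_1, \ldots, s_n) = 0$ for every $(s_1, \ldots, s_n) \in \prod_{i=1}^n S_i$, and derive that the coefficient $[\prod_{i=1}^n x_i^{t_i}]_f$ must vanish, contradicting the hypothesis. First I would shrink each $S_i$ to a subset of size exactly $t_i + 1$; this is harmless, since the vanishing assumption only becomes easier to satisfy on a smaller grid while the requirement $|S_i| > t_i$ is preserved. For each $i$ set $g_i(x_i) = \prod_{s \in S_i}(x_i - s)$, a monic polynomial of degree $t_i + 1$, and write $g_i(x_i) = x_i^{t_i+1} - \bar g_i(x_i)$ with $\deg \bar g_i \le t_i$. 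The defining relation $g_i(s) = 0$ for $s \in S_i$ says precisely that on the grid we may replace $x_i^{t_i+1}$ by the lower-degree polynomial $\bar g_i(x_i)$.

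Next I would carry out a reduction of $f$ modulo the $g_i$: repeatedly replace any occurrence of a power $x_i^{m}$ with $m \ge t_i + 1$ using the relation $x_i^{t_i+1} = \bar g_i(x_i)$, producing a polynomial $\tilde f$ with $\deg_{x_i} \tilde f \le t_i$ for every $i$ and satisfying $f = \sum_{i=1}^n h_i g_i + \tilde f$ for suitable $h_i$. Two facts drive the argument. First, each reduction step strictly lowers the degree in the variable being reduced, hence strictly lowers the total degree of the affected monomial, so $\deg \tilde f \le \deg f = \sum_i t_i$. Second, and crucially, $\prod_i x_i^{t_i}$ is the unique monomial of total degree $\sum_i t_i$ all of whose exponents are at most $t_i$; every other monomial of that top degree has some exponent exceeding $t_i$ and is therefore reduced into strictly lower-degree terms, while $\prod_i x_i^{t_i}$ is already reduced and is never created by the procedure. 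Consequently $[\prod_i x_i^{t_i}]_{\tilde f} = [\prod_i x_i^{t_i}]_f \neq 0$, so in particular $\tilde f$ is not the zero polynomial. Finally, since $g_i$ vanishes on $S_i$ and $f$ vanishes on the grid by assumption, the identity $\tilde f = f - \sum_i h_i g_i$ shows $\tilde f(s_1, \ldots, s_n) = 0$ for every point of $\prod_i S_i$.

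To finish I would invoke the elementary grid-vanishing lemma: a polynomial $\tilde f$ with $\deg_{x_i} \tilde f \le t_i$ for each $i$ that vanishes on all of $\prod_i S_i$ with $|S_i| = t_i + 1 > t_i$ must be identically zero. This is proved by induction on $n$: the base case $n = 1$ is just the fact that a nonzero univariate polynomial of degree at most $t_1$ has at most $t_1$ roots; for the inductive step one expands $\tilde f = \sum_{j=0}^{t_n} c_j(x_1, \ldots, x_{n-1}) x_n^j$, fixes the first $n-1$ coordinates in their grids to see that each resulting univariate polynomial in $x_n$ vanishes on $S_n$ and hence is zero, and then applies the inductive hypothesis to each coefficient $c_j$. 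Applying this to our $\tilde f$ forces $\tilde f \equiv 0$, contradicting $[\prod_i x_i^{t_i}]_{\tilde f} \neq 0$ and completing the proof. The main obstacle I anticipate is the careful bookkeeping in the reduction step, namely verifying rigorously that the procedure terminates, that it does not raise the total degree, and that the coefficient of the distinguished monomial $\prod_i x_i^{t_i}$ is preserved exactly; everything else is either the routine reduction to the equal-size case or the standard inductive grid lemma.
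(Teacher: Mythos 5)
The paper does not prove this statement: it is quoted as Alon's Combinatorial Nullstellensatz directly from~\cite{A99}, so there is no internal proof to compare against. Your argument is correct, and it is essentially Alon's original proof: shrink each $S_i$ to size exactly $t_i+1$, reduce $f$ modulo the monic univariate polynomials $g_i(x_i)=\prod_{s\in S_i}(x_i-s)$, note that reduction never raises total degree and leaves untouched the coefficient of $\prod_i x_i^{t_i}$ (the unique monomial of top degree $\sum_i t_i$ with every exponent within the box), and finish with the standard grid-vanishing lemma for polynomials of degree at most $t_i$ in each variable, proved by induction on $n$. The one bookkeeping point you flag, termination and degree control of the reduction, is handled cleanly by performing division with remainder sequentially, one variable at a time: since $g_i$ is monic and univariate in $x_i$, dividing by $g_i$ lowers only the $x_i$-degree and does not disturb the degree bounds already achieved in the other variables, so after $n$ divisions one obtains $f=\sum_i h_i g_i+\tilde f$ with $\deg_{x_i}\tilde f\le t_i$ for all $i$, exactly as your argument requires.
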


Many exciting applications of Theorem~\ref{thm: combnull} have appeared in the literature.  Since we are seeking to apply Theorem~\ref{thm: combnull} to a generalization of list coloring, we will now highlight how Theorem~\ref{thm: combnull} has been applied to list coloring.  We will begin with some definitions.

In the classical vertex coloring problem we wish to color the vertices of a graph $G$ with up to $m$ colors from $[m]$ so that adjacent vertices receive different colors, a so-called \emph{proper $m$-coloring}. The chromatic number of a graph, denoted $\chi(G)$, is the smallest $m$ such that $G$ has a proper $m$-coloring.  List coloring is a well-known variation on the classical vertex coloring problem, and it was introduced independently by Vizing~\cite{V76} and Erd\H{o}s, Rubin, and Taylor~\cite{ET79} in the 1970's.  For list coloring, we associate a \emph{list assignment}, $L$, with a graph $G$ such that each vertex $v \in V(G)$ is assigned a list of colors $L(v)$ (we say $L$ is a list assignment for $G$).  The graph $G$ is \emph{$L$-colorable} if there exists a proper coloring $f$ of $G$ such that $f(v) \in L(v)$ for each $v \in V(G)$ (we refer to $f$ as a \emph{proper $L$-coloring} of $G$).

A list assignment $L$ is called an \emph{$m$-assignment} for $G$ if $|L(v)|=m$ for each $v \in V(G)$.  The \emph{list chromatic number} of a graph $G$, denoted $\chi_\ell(G)$, is the smallest $m$ such that $G$ is $L$-colorable whenever $L$ is an $m$-assignment for $G$ \footnote{Clearly, $\chi(G) \leq \chi_\ell(G)$ for any graph $G$ since an $m$-assignment could assign the same $m$ colors to every vertex in $V(G)$.}. We say $G$ is \emph{$m$-choosable} if $m \geq \chi_\ell(G)$.  If $f: V(G) \rightarrow \N$, we say that $G$ is \emph{$f$-choosable} if $G$ is $L$-colorable whenever $L$ is a list assignment for $G$ satisfying $|L(v)|=f(v)$ for each $v \in V(G)$.

Given a graph $G$ with $V(G) = \{v_1, \ldots, v_n \}$, the \emph{graph polynomial of $G$}, denoted $f_G$, is given by $f_G(x_1, x_2, \ldots, x_n) = \prod_{v_iv_j \in E(G), \; j>i} (x_i-x_j).$  Importantly, notice that it is possible to view $f_G$ as an element of $\mathbb{F}[x_1, \ldots, x_n]$ where $\mathbb{F}$ is any field~\footnote{Unless otherwise noted, when we are working with a graph polynomial $f_G(x_1, \ldots, x_n)$ we assume that $f_G \in \R[x_1, \ldots, x_n]$.}.  Notice also that $f_G$ is homogenous of degree $|E(G)|$.  Also, if $L$ is a list assignment for $G$ such that $L(v) \subset \N$ for each $v \in V(G)$, then $G$ is $L$-colorable if and only if there is a $(c_1, \ldots, c_n) \in \prod_{i=1}^n L(v_i)$ such that $f_G(c_1, c_2, \ldots, c_n) \neq 0$.  This observation makes it clear as to why Theorem~\ref{thm: combnull} can be useful in attacking list coloring problems.

We will now introduce one of the most useful tools for applying the Combinatorial Nullstellensatz to list coloring: the celebrated Alon-Tarsi Theorem.  Suppose $D$ is a simple digraph. We say that $E$ is a \emph{circulation} contained in $D$ if $E$ is a spanning subgraph of $D$ and for each $v \in V(D)$, $d^-_E(v) = d^+_E(v)$.  We say a circulation is \emph{even} (resp. \emph{odd}) if it has an even (resp. odd) number of edges.

Suppose $G$ is a graph with $V(G) = \{v_1, \ldots, v_n \}$, and suppose that $D$ is an orientation of $G$.  Let $\diff(D)$ denote the absolute value of the difference in the number of even and odd circulations contained in $D$.  Alon and Tarsi~\cite{AT92} showed that $\diff(D) = \left|[\prod_{i=1}^n x_i^{d^+_D(v_i)}]_{f_G} \right|$ when $f_G \in \R[x_1, \ldots, x_n]$.  This and Theorem~\ref{thm: combnull} immediately imply the following result.

\begin{thm} [Alon-Tarsi Theorem~\cite{AT92}] \label{thm: AT}
Let $D$ be an orientation of the graph $G$.  Suppose that $f:V(G) \rightarrow \N$ is given by $f(v) = 1 + d_D^+(v)$ for each $v \in V(G)$.  If the number of even and odd circulations contained in $D$ differ, then $G$ is $f$-choosable, and consequently $\chi_{\ell}(G) \leq \max_{v \in V(G)} (1 + d_D^+(v))$.
\end{thm}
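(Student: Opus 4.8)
The plan is to derive the Alon--Tarsi Theorem as a direct consequence of the two facts quoted immediately before the statement: the Alon--Tarsi formula relating $\diff(D)$ to a coefficient of the graph polynomial, and the Combinatorial Nullstellensatz (Theorem~\ref{thm: combnull}). First I would recall the setup: $D$ is an orientation of $G$ with $V(G) = \{v_1, \ldots, v_n\}$, and we set $t_i = d_D^+(v_i)$ for each $i \in [n]$. The key observation is that $f_G$ is homogeneous of degree $|E(G)|$, and since every edge of $G$ is oriented exactly once in $D$, we have $\sum_{i=1}^n d_D^+(v_i) = |E(G)|$. Hence the monomial $\prod_{i=1}^n x_i^{t_i}$ has total degree exactly equal to $\deg f_G$, so it is a candidate for the top-degree coefficient required by Theorem~\ref{thm: combnull}.

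Next I would invoke the Alon--Tarsi formula $\diff(D) = \bigl| [\prod_{i=1}^n x_i^{d_D^+(v_i)}]_{f_G} \bigr|$. The hypothesis that the numbers of even and odd circulations contained in $D$ differ says precisely that $\diff(D) \neq 0$, and therefore $[\prod_{i=1}^n x_i^{t_i}]_{f_G} \neq 0$. Thus $f_G$ satisfies the two algebraic hypotheses of Theorem~\ref{thm: combnull} with this choice of exponents: its degree equals $\sum_{i=1}^n t_i$ and the corresponding coefficient is nonzero (here we work in $\mathbb{F} = \R$, as stipulated for graph polynomials).

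Now let $L$ be any list assignment for $G$ with $|L(v_i)| = f(v_i) = 1 + d_D^+(v_i) = 1 + t_i$ for each $i$. Taking $S_i = L(v_i) \subseteq \R$ (we may regard the colors as real numbers, since $L(v) \subset \N$), we have $|S_i| = 1 + t_i > t_i$, so the size condition of Theorem~\ref{thm: combnull} holds for every $i$. The Combinatorial Nullstellensatz then yields a point $(c_1, \ldots, c_n) \in \prod_{i=1}^n L(v_i)$ with $f_G(c_1, \ldots, c_n) \neq 0$. By the equivalence noted earlier in the excerpt---that $G$ is $L$-colorable if and only if $f_G$ is nonzero at some such point---this gives a proper $L$-coloring of $G$. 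Since $L$ was an arbitrary list assignment with $|L(v)| = f(v)$, the graph $G$ is $f$-choosable.

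Finally, the chromatic bound follows immediately: any $m$-assignment with $m \geq \max_{v \in V(G)}(1 + d_D^+(v))$ can be thinned to lists of size exactly $f(v)$, so $f$-choosability gives $L$-colorability, whence $\chi_\ell(G) \leq \max_{v \in V(G)}(1 + d_D^+(v))$. I do not anticipate a genuine obstacle here, since both ingredients are supplied; the only point requiring care is matching the exponents correctly---verifying that $\sum_i t_i = \deg f_G$ so that the top-degree coefficient hypothesis of Theorem~\ref{thm: combnull} is exactly the coefficient computed by the Alon--Tarsi formula, rather than a lower-degree coefficient that would not control the top-degree behavior the Nullstellensatz requires.
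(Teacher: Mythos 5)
Your proposal is correct and is exactly the argument the paper intends: it deduces Theorem~\ref{thm: AT} ``immediately'' from the Alon--Tarsi coefficient formula $\diff(D) = \bigl|[\prod_{i=1}^n x_i^{d_D^+(v_i)}]_{f_G}\bigr|$ together with Theorem~\ref{thm: combnull}, just as you do. Your care in checking that $\sum_i t_i = |E(G)| = \deg f_G$ (so the nonzero coefficient is indeed of top degree) is the right point to verify, and your handling of it is sound.
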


Many applications and extensions of the Alon-Tarsi Theorem have appeared in the literature (see e.g.,~\cite{AT92, FS92, H11, JT95, KM18, PW03, S10, S12, S14, WQ19, Z17}).

\subsection{DP-coloring and generalized $S$-labelings of Graphs}

In 2015, Dvo\v{r}\'{a}k and Postle~\cite{DP15} introduced DP-coloring (they called it correspondence coloring) in order to prove that every planar graph without cycles of lengths 4 to 8 is 3-choosable. DP-coloring has been extensively studied over the past 5 years (see e.g.,~\cite{B16,B17, BK17, BK182,BK19,BK18, BK172, KM19, KO18, KO182, KY17, LL19, LLYY19, Mo18, M18}). Intuitively, DP-coloring considers the worst-case scenario of how many colors we have to use in a list coloring if we no longer can identify the names of the colors.  Following~\cite{BK17}, we now give the formal definition.  Suppose $G$ is a graph.  A \emph{cover} of $G$ is a pair $\mathcal{H} = (L,H)$ consisting of a graph $H$ and a function $L: V(G) \rightarrow \mathcal{P}(V(H))$ satisfying the following four requirements:

\vspace{5mm}

\noindent(1) the set $\{L(u) : u \in V(G) \}$ is a partition of $V(H)$; \\
(2) for every $u \in V(G)$, the graph $H[L(u)]$ is complete; \\
(3) if $E_H(L(u),L(v))$ is nonempty, then $u=v$ or $uv \in E(G)$; \\
(4) if $uv \in E(G)$, then $E_H(L(u),L(v))$ is a matching (the matching may be empty).

\vspace{5mm}

Suppose $\mathcal{H} = (L,H)$ is a cover of $G$. An \emph{$\mathcal{H}$-coloring} of $G$ is an independent set in $H$ of size $|V(G)|$.  It is immediately clear that an independent set $I \subseteq V(H)$ is an $\mathcal{H}$-coloring of $G$ if and only if $|I \cap L(u)|=1$ for each $u \in V(G)$.

Given a function $f: V(G) \rightarrow \N$, we say $\mathcal{H}$ is an \emph{$f$-cover} if $|L(u)|=f(u)$ for each $u \in V(G)$. Along the lines of $f$-choosability of graphs, we say that $G$ is \emph{$f$-DP-colorable} if $G$ is $\mathcal{H}$-colorable whenever $\mathcal{H}$ is an $f$-cover of $G$. We say $G$ is \emph{DP-$m$-colorable} if $G$ is $f$-DP-colorable when $f(u)=m$ for each $u \in V(G)$. A cover $\mathcal{H}=(L,H)$ is called \emph{$m$-fold} if $|L(u)|=m$ for each $u \in V(G)$.  The \emph{DP-chromatic number} of a graph $G$, $\chi_{DP}(G)$, is the smallest $m \in \N$ such that $G$ is DP-$m$-colorable.

Given an $m$-assignment, $L$, for a graph $G$, it is easy to construct an $m$-fold cover $\mathcal{H}$ of $G$ such that $G$ has an $\mathcal{H}$-coloring if and only if $G$ has a proper $L$-coloring (see~\cite{BK17}).  It follows that $\chi_\ell(G) \leq \chi_{DP}(G)$.  This inequality may be strict since it is easy to prove that $\chi_{DP}(C_n) = 3$ whenever $n \geq 3$, but the list chromatic number of any even cycle is 2 (see~\cite{BK17} and~\cite{ET79}).  Notice that like $m$-choosability, the graph property of having DP-chromatic number at most $m$ is monotone. The \emph{coloring number} of a graph $G$, denoted $\col(G)$, is the smallest integer $d$ for which there exists an ordering, $v_1, v_2, \ldots, v_n$, of the elements in $V(G)$ such that each vertex $v_i$ has at most $d-1$ neighbors among $v_1, v_2, \ldots, v_{i-1}$.  It is easy to prove that $\chi(G) \leq \chi_\ell(G) \leq \chi_{DP}(G) \leq \col(G) \leq \Delta(G)+1$, and Dvo\v{r}\'{a}k and Postle~\cite{DP15} observed that $\chi_{DP}(G) \leq \Delta(G)$ provided that $G$ is neither a cycle nor a complete graph.

Since there are striking similarities and differences between DP-coloring and list coloring (see the citations mentioned above), it is natural to ask whether the Combinatorial Nullstellensatz or an analogue of the Alon-Tarsi Theorem can be applied to DP-coloring.  As many researchers have noticed, for any $n \in \N$ the counterclockwise orientation of the edges of a copy of $C_{2n+2}$, $D$, has 2 even circulations and 0 odd circulations.  Moreover, the outdegree of every vertex in $D$ is 1 which means $\chi_\ell(C_{2n+2}) \leq 2$ by the Alon-Tarsi Theorem. However, $\chi_{DP}(C_{2n+2}) = 3$.  So, we are not able to replace $\chi_\ell(G)$ with $\chi_{DP}(G)$ in the statement of the Alon-Tarsi Theorem (i.e., Theorem~\ref{thm: AT}).

The purpose of this paper is to explore the possibility of applying the Combinatorial Nullstellensatz to the study of DP-coloring of graphs, despite this setback. In fact, our arguments will apply to a far-reaching generalization of many coloring problems, $S$-labeling, recently introduced by Jin, Wong, and Zhu~\cite{JW19}. Specifically, the notion of $S$-labeling is a common generalization of signed $k$-coloring, signed $\mathbb{Z}_k$-coloring, DP-coloring, group coloring, and coloring of gained graphs. Suppose that $S$ is some nonempty subset of the symmetric group $S_k$ over some set $A$ with $|A|=k$. An \emph{S-labeling} of graph $G$ is a pair $(D, \sigma)$ consisting of an orientation $D$ of $G$ and a function $\sigma : E(D) \rightarrow S$.  A \emph{proper $k$-coloring of $(D, \sigma)$} is a mapping $f: V(G) \rightarrow A$ such that $\sigma((x,y))(f(x)) \neq f(y)$ for each $(x,y) \in E(D)$. We say $G$ is \emph{$S$-$k$-colorable} if there is a proper $k$-coloring of $(D, \sigma)$ whenever $(D, \sigma)$ is an $S$-labeling of $G$.  Observe that a graph $G$ is DP-$k$-colorable if and only if $G$ is $S_k$-$k$-colorable.

\subsection{Outline of the Results}


The key obstacle to overcome in applying the Combinatorial Nullstellensatz to DP-coloring is: the graph polynomial $f_G$ is typically viewed as a polynomial over $\R$ which allows us only to prove results in the DP-coloring context on covers that correspond to list assignments. In this paper we view graph polynomials as polynomials over some appropriate finite field which allows us to apply the Combinatorial Nullstellensatz to certain covers with list sizes bounded by a power of a prime. This flexibility allows us to apply the Combinatorial Nullstellensatz, and the tools derived from it, to many covers that do not correspond to any list assignment and to coloring problems of $S$-labelings (defined above).

In Section~\ref{good}, we define the notion of \emph{good covers of prime order} which allows us to apply the Combinatorial Nullstellensatz to DP-coloring as follows (note that in the statement of the Theorem below $t$ is a prime power and every set $L(v)$ has size at most $t$).

\begin{thm} \label{thm: null}
Let $G$ be a graph with $V(G) = \{v_1, \ldots, v_n \}$, and let $\mathcal{H} = (L,H)$ be a good prime cover of $G$ of order $t$.  Suppose that $f_G \in \mathbb{F}_t[x_1, \ldots, x_n]$.  If $[ \prod_{i=1}^n x_i^{t_i}]_{f_G} \neq 0$ and $|L(v_i)| > t_i$ for each $i \in [n]$, then there exists an $\mathcal{H}$-coloring of $G$.
\end{thm}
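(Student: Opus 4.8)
The plan is to use the defining property of a good prime cover of order $t$ to translate the existence of an $\mathcal{H}$-coloring into the nonvanishing of $f_G$ at some point over the finite field $\mathbb{F}_t$, and then to invoke the Combinatorial Nullstellensatz (Theorem~\ref{thm: combnull}) directly. Since $f_G$ is a product of the $|E(G)|$ linear forms $x_i - x_j$, each with coefficients $\pm 1$ that remain nonzero in any field, $f_G$ is homogeneous of degree $|E(G)|$ when regarded as an element of $\mathbb{F}_t[x_1, \ldots, x_n]$. In particular the hypothesis $[\prod_{i=1}^n x_i^{t_i}]_{f_G} \neq 0$ forces $\sum_{i=1}^n t_i = |E(G)| = \deg f_G$, so the degree requirement in Theorem~\ref{thm: combnull} is met automatically.

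The heart of the argument is a correspondence between $\mathcal{H}$-colorings and the relevant points of $\mathbb{F}_t^n$. First I would use the good prime cover structure to fix, for each $i$, an injection $\phi_i : L(v_i) \to \mathbb{F}_t$, and set $S_i = \phi_i(L(v_i)) \subseteq \mathbb{F}_t$, so that $|S_i| = |L(v_i)|$. The defining feature I expect from a good prime cover of order $t$ is that these labelings can be chosen simultaneously so that, for every edge $v_iv_j \in E(G)$, the matching $E_H(L(v_i), L(v_j))$ consists precisely of the pairs $xy$ with $x \in L(v_i)$, $y \in L(v_j)$, and $\phi_i(x) = \phi_j(y)$. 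In other words, the only forbidden adjacencies in $H$ between $L(v_i)$ and $L(v_j)$ are those in which the two chosen colors carry equal $\mathbb{F}_t$-labels.

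Granting this, I would establish a bijection between $\mathcal{H}$-colorings of $G$ and tuples $(a_1, \ldots, a_n) \in \prod_{i=1}^n S_i$ satisfying $f_G(a_1, \ldots, a_n) \neq 0$ in $\mathbb{F}_t$. Given an $\mathcal{H}$-coloring, realized as a choice of one color $c_i \in L(v_i)$ for each $i$ spanning an independent set of $H$, put $a_i = \phi_i(c_i)$; for each edge $v_iv_j$ the independence of the set gives $c_ic_j \notin E_H(L(v_i), L(v_j))$, hence $a_i \neq a_j$, and since $\mathbb{F}_t$ is a field the product $f_G(a_1, \ldots, a_n) = \prod_{v_iv_j \in E(G),\, j>i}(a_i - a_j)$ is nonzero. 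Conversely, from a tuple with $f_G \neq 0$, the preimages $c_i = \phi_i^{-1}(a_i)$ yield one color per list; distinct chosen colors lie in distinct lists by requirement (1), requirement (3) forbids edges of $H$ between non-adjacent lists, and for each edge $v_iv_j$ the inequality $a_i \neq a_j$ rules out the single matching edge that could join $c_i$ to $c_j$, so the selected colors form an independent set and hence an $\mathcal{H}$-coloring. Finally, since $|S_i| = |L(v_i)| > t_i$ for each $i$, applying Theorem~\ref{thm: combnull} to $f_G$ over $\mathbb{F}_t$ with the sets $S_i$ produces a point $(a_1, \ldots, a_n) \in \prod_{i=1}^n S_i$ with $f_G(a_1, \ldots, a_n) \neq 0$, which the bijection converts into the desired $\mathcal{H}$-coloring.

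The step I expect to be most delicate is the reduction encapsulated in the middle paragraph: everything rests on the good prime cover guaranteeing a single, consistent assignment of $\mathbb{F}_t$-labels under which every matching becomes ``equality of labels.'' This is exactly the property that a general DP-cover lacks, reflecting why the Alon-Tarsi theorem fails for DP-coloring, so the real content is that \emph{goodness} is precisely what makes $f_G$ over $\mathbb{F}_t$ the correct detector of $\mathcal{H}$-colorings. Once the correspondence is in place the appeal to the Combinatorial Nullstellensatz is immediate, and I would only need to verify carefully that partial (non-perfect) matchings and the homogeneity of $f_G$ over a field of positive characteristic cause no trouble.
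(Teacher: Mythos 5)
Your middle paragraph asserts a normal form for good prime covers that they do not have, and the proof breaks precisely there. Goodness guarantees only that for each edge $v_iv_j$ there is a constant $\beta_{i,j} \in \mathbb{F}_t$ with $a - \sigma_{v_iv_j}^{\mathcal{H}}(a) = \beta_{i,j}$ for all $a \in A_{v_iv_j}^{\mathcal{H}}$; it does not say these constants can all be made $0$ simultaneously. Normalizing by relabeling each list with an additive shift $c_i$ replaces $\beta_{i,j}$ by $\beta_{i,j} + c_i - c_j$, so you would need the $\beta$'s to sum to zero around every cycle of $G$, which fails in general; and no fancier relabeling helps, since the composition of the saturation functions around a cycle is conjugated, not trivialized, by relabelings. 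Concretely, the twisted $2$-fold cover of $C_{2n+2}$ in which every matching is the identity except one swap is a good prime cover of order $2$ (every $2$-fold cover is good, as the paper notes), yet it admits no $\mathcal{H}$-coloring, while the points of $\mathbb{F}_2^{2n+2}$ with $f_G \neq 0$ (the proper $2$-colorings of the even cycle) certainly exist. So your claimed correspondence between $\mathcal{H}$-colorings and nonvanishing points of $f_G$ is false for good covers in general; this has nothing to do with the coefficient hypothesis of the theorem, which merely fails to hold in this example.

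The repair is exactly what the paper does: keep the shifts inside the polynomial rather than trying to relabel them away. Set $\hat{f}(x_1, \ldots, x_n) = \prod_{v_iv_j \in E(G), \; j>i} (x_i - x_j - \beta_{i,j}) \in \mathbb{F}_t[x_1, \ldots, x_n]$ and let $P(v_i) = \{ j \in \mathbb{F}_t : (v_i,j) \in L(v_i) \}$. A point $(p_1, \ldots, p_n) \in \prod_{i=1}^n P(v_i)$ with $\hat{f}(p_1, \ldots, p_n) \neq 0$ yields an $\mathcal{H}$-coloring, because adjacency of $(v_i,p_i)$ and $(v_j,p_j)$ in $H$ is precisely the relation $p_i - p_j - \beta_{i,j} = 0$, and $\mathbb{F}_t$ has no zero divisors. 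Since $\sum_{i=1}^n t_i = |E(G)| = \deg \hat{f}$, the monomial $\prod_{i=1}^n x_i^{t_i}$ sits in top degree, so the lower-order corrections introduced by the $\beta_{i,j}$ do not affect its coefficient and $[\prod_{i=1}^n x_i^{t_i}]_{\hat{f}} = [\prod_{i=1}^n x_i^{t_i}]_{f_G} \neq 0$; applying Theorem~\ref{thm: combnull} to $\hat{f}$ with $S_i = P(v_i)$ then finishes the proof. Your degree bookkeeping and the translation of a nonvanishing point into an independent set go through verbatim once $f_G$ is replaced by $\hat{f}$. (Incidentally, the full bijection you claim is also stronger than what holds when matchings are non-perfect, but only the direction from nonvanishing points to colorings is ever used.)
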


We then show how Theorem~\ref{thm: null} can be used to prove the well-known result that non-trivial trees have DP-chromatic number equal to 2. We also discuss how this result in fact applies to coloring problems of $S$-labelings.

In Section~\ref{cone}, we show how Theorem~\ref{thm: null} above along with the Quantitative Combinatorial Nullstellensatz, a generalization of the Combinatorial Nullstellensatz, can be used to prove results on DP-coloring of cones of certain graphs.  Specifically, we prove the following result.

\begin{thm}\label{thm:bipartite}
Let $G$ be a connected bipartite graph with $|V(G)| =|E(G)|$. Then, $G'=K_1 \vee G$ is $f$-DP-colorable with $f(v_1)=2$ for the universal vertex, and $f(v)=3$ for all $v \in V(G')- \{v_1\}$.
\end{thm}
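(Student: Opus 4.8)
The plan is to work over the field $\mathbb{F}_3$ and to exploit the factorization of the graph polynomial of the cone. First I would record the structure of $G$: since $G$ is connected with $|V(G)|=|E(G)|$ it is unicyclic, and since it is bipartite its unique cycle $C$ has even length. Writing $x_0$ for the variable of the universal vertex $v_1$ and $x_1,\ldots,x_n$ for the $n$ vertices of $G$, the star edges contribute $\prod_{i=1}^n (x_0-x_i)$ and the edges of $G$ contribute $\pm f_G$, so that $f_{G'}=\pm f_G(x_1,\ldots,x_n)\cdot \prod_{i=1}^n(x_0-x_i)$ as an element of $\mathbb{F}_3[x_0,x_1,\ldots,x_n]$. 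Here $\deg f_{G'}=|E(G')|=|E(G)|+n=2n$.

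Next I would identify the target monomial dictated by the list sizes. With $f(v_1)=2$ and $f(v)=3$ we may take the exponent of $x_0$ to be $0$ and the exponent of each $x_i$ to be $2$; since this monomial $\prod_{i=1}^n x_i^2$ has full degree $2n$, the Combinatorial Nullstellensatz only sees its coefficient. Extracting it from the factorization, only the constant-in-$x_0$ term $(-1)^n\prod_{i=1}^n x_i$ of $\prod_i(x_0-x_i)$ can contribute, giving $[\prod_{i=1}^n x_i^2]_{f_{G'}}=\pm[\,x_1\cdots x_n\,]_{f_G}$. Thus everything reduces to the coefficient of the multilinear monomial $x_1\cdots x_n$ in $f_G$.

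The heart of the argument is this coefficient computation, and here the hypotheses enter decisively. The coefficient $[\,x_1\cdots x_n\,]_{f_G}$ is a signed count of the orientations of $G$ in which every vertex has in-degree exactly $1$ (each factor $(x_a-x_b)$ of $f_G$ selects a head for its edge). For a unicyclic graph these orientations are forced on the tree edges—peeling leaves shows each must point away from $C$—and are free only in the choice of one of the two cyclic orientations of $C$; hence there are exactly two. Because $C$ is even, the two orientations differ by reversing an even number of edges and therefore carry the same sign, so their contributions add rather than cancel and $[\,x_1\cdots x_n\,]_{f_G}=\pm 2$. I would stress that this is precisely where bipartiteness is used: an odd cycle would produce opposite signs and a vanishing coefficient. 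Since $\pm 2\not\equiv 0\pmod 3$ the coefficient is nonzero in $\mathbb{F}_3$; note that it vanishes mod $2$, which is exactly why the prime $3$ is essential.

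Finally I would invoke the coloring machinery. For a good prime cover of $G'$ of order $3$, Theorem~\ref{thm: null} applied with the monomial $\prod_{i=1}^n x_i^2$ (using $|L(v_1)|=2>0$ and $|L(v_i)|=3>2$) immediately yields an $\mathcal{H}$-coloring. The remaining and most delicate point—what I expect to be the main obstacle—is passing from an arbitrary $f$-cover to a good prime cover of order $3$: a general matching on a $3$-element set need not be a translation of $\mathbb{F}_3$ (a transposition, for instance, is not), so Theorem~\ref{thm: null} does not apply on the nose. This is where the extra color at the universal vertex (list size $2$ rather than $1$, which would already suffice for good prime covers) and the Quantitative Combinatorial Nullstellensatz are needed: the two admissible colors of $v_1$ give two reduced covers of $G$, and the quantitative count should let us show that at least one admits a coloring by again reducing matters to the nonvanishing of the coefficient $\pm 2$ computed above. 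Carrying out this reduction while keeping the list sizes at $2$ and $3$ is the technical crux of the proof.
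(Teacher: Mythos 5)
Your algebraic half is correct, and it is a genuine (and arguably cleaner) variant of what the paper does in Proposition~\ref{pro: bipartite1}: you extract $\bigl[\prod_{i=1}^n x_i^2\bigr]_{f_{G'}} = \pm[x_1\cdots x_n]_{f_G}$ from the factorization $f_{G'}=\pm f_G\cdot\prod_{i=1}^n(x_0-x_i)$ and evaluate the multilinear coefficient by the Alon--Tarsi signed count of the two in-degree-one orientations of the unicyclic graph $G$, which agree in sign because the unique cycle is even, giving $\pm 2\neq 0$ in $\mathbb{F}_3$. The paper obtains the same nonvanishing (the value $2(-1)^m$) by instead applying the Quantitative Combinatorial Nullstellensatz and summing over the two points of $\{0\}\times\{0,1,2\}^{m+n}$ at which $f_{G'}$ is nonzero, these arising from the unique bipartition of $G$; your orientation count replaces that evaluation and even works directly on $G$ rather than after reduction to a cycle. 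Combined with Theorem~\ref{thm: null}, either computation disposes of every \emph{good} prime cover of order $3$, indeed already with $f(v_1)=1$.

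The genuine gap is the step you yourself flag as the crux: passing from an arbitrary $f$-cover to good covers, which is where the entire content of the theorem beyond Proposition~\ref{pro: bipartite1} lives, and the mechanism you sketch for it would fail. Fixing one of the two colors of $v_1$ produces a $2$-fold cover of $G$, and the coefficient $\pm2$ gives no purchase on it: over $\mathbb{F}_2$ that coefficient is $2\equiv 0$ (consistent with the fact that $\chi_{DP}(C_{2k+2})=3$, so a single reduced cover genuinely may admit no coloring), while over $\mathbb{F}_3$ Theorem~\ref{thm: null} applies only if the reduced cover is \emph{good}, which need not hold: if the composition of the matchings around the even cycle acts on $\mathbb{F}_3$ as a transposition rather than a shift, no renaming of $V(H)$ makes all saturation functions good, since shifts form a subgroup and the conjugacy class of the composed permutation is invariant under renaming. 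So ``the quantitative count shows at least one reduced cover admits a coloring'' cannot be made to work as stated; the obstruction in the bad case is not a coefficient at all. The paper closes this case combinatorially (Proposition~\ref{pro: bipartite2}): first reduce to $G'=K_1\vee C_{2k+2}$ by deleting vertices of degree less than $3$; then use Proposition~\ref{pro: treeDP} to name the vertices of $H$ so that every matching along a spanning path of the cycle is an identity matching, whence the only possibly bad saturation function is the one closing the cycle; if that one is good, delete a vertex of $L(v_1)$ and invoke the Nullstellensatz result; if it is bad, show that at most one of the two vertices of $L(v_1)$ is a level vertex, select a non-level one, observe that the corresponding reduced cover of the cycle then has some matching with at most one edge, and greedily build a coloring of the cycle through that deficient matching, which together with the selected vertex of $L(v_1)$ yields an $\mathcal{H}$-coloring. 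Some structural argument of this kind, not a further application of the Quantitative Combinatorial Nullstellensatz, is what your proposal is missing.
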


In Section~\ref{unique}, we extend some of the ideas of Section~\ref{cone} to apply to the cones of certain uniquely 3-colorable graphs. We also prove a DP-coloring analogue of the Theorem in~\cite{AM06}, where it is shown that existence of a single appropriate list assignment that leads to a unique coloring of a graph $G$, in fact implies the $f$-choosability of $G$. We illustrate applications of these results with some simple examples.

In Section~\ref{three} we utilize some properties of the finite field of order 3 to establish a sufficient algebraic condition for a graph $G$ to satisfy $\chi_{DP}(G) \leq 3$.

\begin{cor} \label{cor: suff3}
Suppose $G$ is a graph with $\chi_{DP}(G) \geq 2$ and $V(G) = \{v_1, \ldots, v_n \}$.  Let $\mathcal{F} \subseteq \mathbb{F}_3[x_1, \ldots, x_n]$ be the set of at most $2^{|E(G)|}$  polynomials given by:
$$\mathcal{F} = \left \{ \prod_{v_iv_j \in E(G), \; j>i} (x_i + b_{i,j} x_j) : \text{each}\;b_{i,j} \in \{-1, 1 \} \right \}.$$
If for each $f \in \mathcal{F}$ there exists $(t_1, t_2, \ldots, t_n) \in \prod_{i=1}^n \{0,1,2\}$ such that $[ \prod_{i=1}^n x_i^{t_i}]_f \neq 0$, then $\chi_{DP}(G) \leq 3$.
\end{cor}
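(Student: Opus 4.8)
The plan is to show directly that $G$ is DP-$3$-colorable by proving that every $3$-fold cover $\mathcal{H}=(L,H)$ of $G$ admits an $\mathcal{H}$-coloring; the hypothesis $\chi_{DP}(G)\ge 2$ only serves to guarantee that $G$ has an edge, so that the framework of good prime covers of order $3$ from Section~\ref{good} is non-degenerate. First I would fix such a cover and label the colors in each class $L(v_i)$ by the elements of $\mathbb{F}_3$. Since adding matching edges to $H$ only removes potential $\mathcal{H}$-colorings, I may assume that for every edge $v_iv_j\in E(G)$ the matching $E_H(L(v_i),L(v_j))$ is a \emph{perfect} matching between the two $3$-element classes: any missing edges are inserted, and any $\mathcal{H}$-coloring of the enlarged cover is \emph{a fortiori} an $\mathcal{H}$-coloring of the original.

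The crucial structural observation---and the place where the prime $3$ is used---is that a perfect matching between two copies of $\mathbb{F}_3$ is the graph of a bijection, and \emph{every} bijection of $\mathbb{F}_3$ is an affine map $x\mapsto a_{ij}x+b_{ij}$ with $a_{ij}\in\mathbb{F}_3^{*}=\{1,-1\}$ (there are $3!=6$ bijections and exactly $2\cdot 3=6$ such affine maps). Consequently the forbidden pairs on the edge $v_iv_j$ (with $j>i$) are precisely the three common zeros in $\mathbb{F}_3^{2}$ of a single linear form $x_i-a_{ij}x_j-b_{ij}$. I would then form the \emph{cover polynomial}
\[
P \;=\; \prod_{v_iv_j\in E(G),\, j>i}\bigl(x_i - a_{ij}x_j - b_{ij}\bigr)\ \in\ \mathbb{F}_3[x_1,\dots,x_n],
\]
so that a tuple $(c_1,\dots,c_n)\in\prod_i L(v_i)=\mathbb{F}_3^{\,n}$ yields an $\mathcal{H}$-coloring if and only if $P(c_1,\dots,c_n)\neq 0$; this is precisely the graph polynomial attached to the good prime cover of order $3$ determined by the $a_{ij}$, to which Theorem~\ref{thm: null} is designed to apply.

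Here is the point that makes the purely homogeneous family $\mathcal{F}$ sufficient even though the matchings may involve genuine shifts $b_{ij}\neq 0$: the top-degree part of $P$, which has degree $|E(G)|$, is
\[
\prod_{v_iv_j\in E(G),\, j>i}\bigl(x_i - a_{ij}x_j\bigr) \;=\; \prod_{v_iv_j\in E(G),\, j>i}\bigl(x_i + b'_{ij}x_j\bigr), \qquad b'_{ij}:=-a_{ij}\in\{-1,1\},
\]
which is exactly the element $f\in\mathcal{F}$ indexed by the sign pattern $(b'_{ij})$. The lower-order terms of $P$ created by the shifts $b_{ij}$ have degree strictly less than $|E(G)|$ and therefore cannot contribute to any monomial $\prod_i x_i^{t_i}$ with $\sum_i t_i=|E(G)|$. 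By hypothesis this $f$ has some monomial $\prod_i x_i^{t_i}$ with each $t_i\in\{0,1,2\}$ and $[\prod_i x_i^{t_i}]_f\neq 0$; as $f$ is homogeneous of degree $|E(G)|$ we get $\sum_i t_i=|E(G)|=\deg P$, and hence $[\prod_i x_i^{t_i}]_{P}=[\prod_i x_i^{t_i}]_{f}\neq 0$. Applying the Combinatorial Nullstellensatz (Theorem~\ref{thm: combnull}) over $\mathbb{F}_3$ with $S_i=L(v_i)$, so that $|S_i|=3>2\ge t_i$, yields $(c_1,\dots,c_n)\in\mathbb{F}_3^{\,n}$ with $P(c_1,\dots,c_n)\neq 0$, i.e.\ an $\mathcal{H}$-coloring; equivalently one may quote Theorem~\ref{thm: null} directly for the good cover. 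Since $\mathcal{H}$ was arbitrary, $\chi_{DP}(G)\le 3$.

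The main obstacle is conceptual rather than computational: it is the recognition that, over $\mathbb{F}_3$, (i) every matching may be completed to the graph of an \emph{affine} bijection, and (ii) the additive shifts of these affine maps live in degrees below the top degree and are therefore invisible to the relevant coefficient in the Combinatorial Nullstellensatz. Together these reduce the infinitely many $3$-fold covers of $G$ to the finite, purely homogeneous family $\mathcal{F}$ of at most $2^{|E(G)|}$ polynomials. I would take care to verify the two elementary reductions rigorously---that enlarging each matching to a perfect matching only shrinks the set of $\mathcal{H}$-colorings, and that a single linear form cuts out exactly the forbidden pairs of each matching (so that $P$ faithfully detects $\mathcal{H}$-colorings)---since the entire argument rests on this faithful translation between covers and the polynomials in $\mathcal{F}$.
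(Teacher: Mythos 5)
Your proposal is correct and takes essentially the same approach as the paper: the paper routes the argument through Observation~\ref{obs: crucial} (over $\mathbb{F}_3$ every non-good saturation function satisfies $a+\sigma_{v_iv_j}^{\mathcal{H}}(a)=\beta$, which is exactly your observation that every matching is affine with slope $\pm 1$) and Theorem~\ref{thm: null3}, whose proof is precisely your construction of the shifted polynomial $\hat{f}=\prod_{v_iv_j\in E(G),\,j>i}\bigl(x_i+B(\sigma_{v_iv_j}^{\mathcal{H}})x_j-\beta_{ij}\bigr)$ whose top-degree coefficients agree with those of an element of $\mathcal{F}$, followed by the Combinatorial Nullstellensatz. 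Your only departures are cosmetic: you complete each matching to a perfect matching before classifying it as an affine bijection (the paper instead handles partial saturation functions directly via the good/bad dichotomy), and you re-derive the content of Theorem~\ref{thm: null3} inline rather than quoting it.
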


We will also demonstrate that for a connected graph $G$ with cycles, we can reduce the number of polynomials to be tested to at most $2^{|E(G)|-|V(G)|+1}$.  We end Section~\ref{three} by showing some applications of Corollary~\ref{cor: suff3}. In particular, we show the converse of Corollary~\ref{cor: suff3} does not hold.  We show that $\chi_{DP}(K_{4,4} - \{e_1,e_2\})=3$, where $\{e_1, e_2\}$ is any matching of size two in $K_{4,4}$. We also completely determine the DP-chromatic number of all cycle squares: $\chi_{DP}(C_3^2)=3$, $\chi_{DP}(C_5^2) = 5$, and $\chi_{DP}(C_n^2)=4$ whenever $n \in \{4, 6, 7, 8, \ldots \}$.

While this paper initiates the study of applying the Combinatorial Nullstellensatz to DP-coloring, it is by no means exhaustive.  Much remains to be discovered regarding applications of the Combinatorial Nullstellensatz and the Quantitative Combinatorial Nullstellensatz to DP-coloring.  For example, it would be interesting to see if the results in Section~\ref{unique} could be extended to larger families of uniquely $k$-colorable graphs.  It would also be interesting to extend the results of Section~\ref{three} and find sufficient algebraic conditions for a graph to satisfy $\chi_{DP}(G) \leq t$ for $t>3$.

\section{Using the Combinatorial Nullstellensatz} \label{null}

\subsection{Combinatorial Nullstellensatz and Good Covers}\label{good}

As we will see below, applying the Combinatorial Nullstellensatz will require us to consider graph polynomials as having coefficients in some finite field.  So, when $t$ is a power of a prime, we use $\mathbb{F}_t$ to denote the finite field of order $t$.  We say that an $f$-cover $\mathcal{H} = (L,H)$ of a graph $G$ is a \emph{prime cover of $G$ of order t} whenever $t$ is a power of a prime and $\max_{v \in V(G)} f(v)\le t$. When the choice of $t$ is implicitly known, we simply use the phrase \emph{prime cover} or \emph{prime $f$-cover}. Since a prime cover could be defined with any large enough prime power $t$, unless otherwise noted, we choose $t$ to be the smallest prime power that is meaningful in context. When $f_G$ is the graph polynomial of graph $G$ and we write $f_G \in \mathbb{F}_t[x_1, \ldots, x_n]$, we are viewing $f_G$ as a polynomial in $n$ variables over $\mathbb{F}_t$.

Suppose $G$ is a graph with $V(G) = \{v_1, \ldots, v_n \}$.  From this point forward, if $\mathcal{H} = (L,H)$ is a prime cover of $G$ of order $t$, we will always name the vertices of $H$ so that $L(v) \subseteq \{(v,j) : j \in \mathbb{F}_t \}$ for each $v \in V(G)$.  Moreover, for each $v_iv_j \in E(G)$ with $j > i$, we let $A_{v_iv_j}^{\mathcal{H}}$ (resp. $B_{v_iv_j}^{\mathcal{H}}$) be the second coordinates of the set of vertices in $L(v_i)$ (resp. $L(v_j)$) saturated by the matching $E_H (L(v_i), L(v_j))$. The \emph{saturation function associated with}  $E_H (L(v_i), L(v_j))$ is the function $\sigma_{v_iv_j}^{\mathcal{H}} : A_{v_iv_j}^{\mathcal{H}} \rightarrow B_{v_iv_j}^{\mathcal{H}}$ that maps each $q \in A_{v_iv_j}^{\mathcal{H}}$ to the unique $r \in B_{v_iv_j}^{\mathcal{H}}$ with property $(v_i,q)(v_j, r) \in E_H (L(v_i), L(v_j))$.  We say that $\sigma_{v_iv_j}^{\mathcal{H}}$ is \emph{good} if there is a $\beta \in \mathbb{F}_t$ such that for each $a \in A_{v_iv_j}^{\mathcal{H}}$,
$$ a - \sigma_{v_iv_j}^{\mathcal{H}}(a) = \beta$$
where subtraction is performed in $\mathbb{F}_t$ (note: we take $\sigma_{v_iv_j}^{\mathcal{H}}$ to be \emph{good} if $A_{v_iv_j}^{\mathcal{H}} = \emptyset$); that is, there is a fixed difference between second coordinates of all pairs of matched vertices in $\mathcal{H}$ that correspond to a single edge in $G$. When $\sigma_{v_iv_j}^{\mathcal{H}}$ is not good, we say that it is \emph{bad}. It is now natural to say the cover $\mathcal{H}$ is \emph{good} if there exists a way to name the vertices of $H$ so that $L(v) \subseteq \{(v,j) : j \in \mathbb{F}_t \}$ for each $v \in V(G)$ and every associated saturation function $\sigma_{v_iv_j}^{\mathcal{H}}$, for $v_iv_j \in E(G)$ with $j > i$, is good.  From this point forward, whenever we are considering a good prime cover $\mathcal{H}=(L,H)$, we will always assume that the vertices of $H$ have been named so that every saturation function is good. There is a straightforward application of the Combinatorial Nullstellensatz when $\mathcal{H} = (L,H)$ is a good prime cover of $G$.

\begin{customthm} {\bf \ref{thm: null}}
Let $G$ be a graph with $V(G) = \{v_1, \ldots, v_n \}$, and let $\mathcal{H} = (L,H)$ be a good prime cover of $G$ of order $t$.  Suppose that $f_G \in \mathbb{F}_t[x_1, \ldots, x_n]$.  If $[ \prod_{i=1}^n x_i^{t_i}]_{f_G} \neq 0$ and $|L(v_i)| > t_i$ for each $i \in [n]$, then there is an $\mathcal{H}$-coloring of $G$.
\end{customthm}

\begin{proof}
For each $v \in V(G)$, let $P(v) = \{ j \in \mathbb{F}_t : (v,j) \in L(v) \}$.  For each $v_iv_j \in E(G)$ with $j > i$, there is a $\beta_{i,j} \in \mathbb{F}_t$ such that $a - \sigma_{v_iv_j}^{\mathcal{H}}(a) - \beta_{i,j} = 0$ for each $a \in A_{v_iv_j}^{\mathcal{H}}$ (note: we arbitrarily choose $\beta_{i,j}$ if $A_{v_iv_j}^{\mathcal{H}} = \emptyset$).  Now, let $\hat{f} \in \mathbb{F}_t[x_1, \ldots, x_n]$ be the polynomial given by
$$ \hat{f}(x_1, x_2, \ldots, x_n) = \prod_{v_iv_j \in E(G), \; j>i} (x_i-x_j - \beta_{ij}).$$
Notice that if $(p_1, p_2, \ldots, p_n) \in \prod_{i=1}^n P(v_i)$ satisfies $\hat{f}(p_1, p_2, \ldots, p_n) \neq 0$, then $I = \{(v_i, p_i) : i \in [n] \}$ is an $\mathcal{H}$-coloring of $G$.  To see why this is so, suppose for the sake of contradiction that $q > r$ and $(v_q,p_q)$ is adjacent to $(v_r, p_r)$ in $H$.  Since $\mathbb{F}_t$ is an integral domain and $\hat{f}(p_1, p_2, \ldots, p_n) \neq 0$, we know that $p_r-p_q - \beta_{r,q} \neq 0$.  On the other hand, since $(v_q,p_q)$ is adjacent to $(v_r, p_r)$ in $H$,
$$0 = p_r - \sigma_{v_rv_q}^{\mathcal{H}}(p_r) - \beta_{r,q} = p_r-p_q - \beta_{r,q}$$
which is a contradiction.

Now, notice that $f_G$ and $\hat{f}$ are polynomials of degree $\sum_{i=1}^n t_i = |E(G)|$.  Also, $[ \prod_{i=1}^n x_i^{t_i}]_{f_G} = [ \prod_{i=1}^n x_i^{t_i}]_{\hat{f}}$.  So, by the Combinatorial Nullstellensatz since $|P(v_i)| = |L(v_i)| > t_i$ for each $i \in [n]$, there is a $(p_1, p_2, \ldots, p_n) \in \prod_{i=1}^n P(v_i)$ such that $\hat{f}(p_1, p_2, \ldots, p_n) \neq 0$.  The result follows.
\end{proof}


We now comment on how Theorem~\ref{thm: null} in fact applies to $S$-$k$-coloring, the common generalization of signed $k$-coloring, signed $\mathbb{Z}_k$-coloring, DP-coloring, group coloring, and coloring of gained graphs, defined earlier. Suppose that $G$ is a graph with $V(G) = \{v_1, \ldots, v_n \}$ and $t$ is a power of a prime. Let $S_t$ be the symmetric group over $\mathbb{F}_t$. Suppose that $S$ is the subset of $S_t$ consisting of all the permutations $p \in S_t$ with the property that $i - p(i)$ is the same element of $\mathbb{F}_t$ for all $i \in \mathbb{F}_t$ (subtraction is performed in $\mathbb{F}_t$).  Theorem~\ref{thm: null} implies that if $f_G \in \mathbb{F}_t[x_1, \ldots, x_n]$, $[ \prod_{i=1}^n x_i^{t_i}]_{f_G} \neq 0$, and $t > t_i$ for each $i \in [n]$, then $G$ is $S$-$t$-colorable.  This is because given any $S$-labeling $(D, \sigma)$ of $G$ there is a corresponding $t$-fold cover $\mathcal{H} = (L,H)$ of $G$ with the following properties: (1) $\mathcal{H}$ is a good prime cover of order $t$ and (2) there is an $\mathcal{H}$-coloring of $G$ if and only if  there is a proper $t$-coloring of $(D, \sigma)$.

Theorem~\ref{thm: null} also sheds some light on why the Alon-Tarsi Theorem cannot be applied in the DP-coloring context to even cycles.  Suppose that $n \in \N$, $G = C_{2n+2}$, and $\mathcal{H}=(L,H)$ is a 2-fold cover of $G$.  If we view $f_G$ as an element of $\mathbb{F}_2[x_1, \ldots, x_n]$, then $[\prod_{i=1}^{2n+2} x_i]_{f_G} = 0$, and the hypotheses of Theorem~\ref{thm: null} will not be satisfied.  We will also see below in Section~\ref{three} that the converse of Theorem~\ref{thm: null} does not hold.

We now show how Theorem~\ref{thm: null} can be used to prove a well-known DP-coloring result.  Our proof will make use of the fact that if $\mathcal{H}=(L,H)$ is any 2-fold cover of a graph $G$ with $V(G) = \{v_1, \ldots, v_n \}$ and vertices of $H$ arbitrarily named so that $L(v) \subseteq \{(v,j) : j \in \mathbb{F}_2 \}$ for each $v \in V(G)$, then it must be that for each $v_iv_j \in E(G)$ with $j > i$, $\sigma_{v_iv_j}^{\mathcal{H}}$ is good.

\begin{pro} \label{pro: tree}
Let $T$ be a tree on at least two vertices. Then, $\chi_{DP}(T)=2$.
\end{pro}

\begin{proof}
Let $V(T) = \{v_1, \ldots, v_n \}$. Since $T$ has at least one edge, $\chi_{DP}(T) > 1$.  Suppose $\mathcal{H} = (L,H)$ is an arbitrary 2-fold cover of $G$.  Clearly, $\mathcal{H}$ is a good prime cover of $G$.  It is well known that there exists an acyclic orientation, $D$, of $T$ with the property that $d_D^+ (v_i) \leq 1$.  Thus, $\text{diff}(D)=1$.  Working over $\mathbb{F}_2$, this means that $[ \prod_{i=1}^n x_i^{d_D^+ (v_i)}]_{f_G} \neq 0$.  Theorem~\ref{thm: null} then implies that $G$ is $\mathcal{H}$-colorable.  It follows that $\chi_{DP}(T) \leq 2$.
\end{proof}

Even though the converse of Theorem~\ref{thm: null} does not hold in general, when the hypotheses of Theorem~\ref{thm: null} are not met, Theorem~\ref{thm: null} can provide a ``clue" as to how one might construct a cover to establish a lower bound on the DP-chromatic number of a graph. Here is an illustration of this idea.

Suppose that $G$ is the line graph of a copy of $K_{p+1}$ where $p$ is an odd prime.  Suppose $V(G) = \{v_1, \ldots, v_n \}$ and we view $f_G$ as a polynomial in $n$ variables over $\mathbb{F}_p$. In~\cite{S14}, Schauz showed that $[ \prod_{i=1}^n x_i^{p-1}]_{f_G} = 0$.  This means that there \emph{may} exist a $p$-fold cover $\mathcal{H}$ of $G$ such that there is no $\mathcal{H}$-coloring of $G$ and for each $v_iv_j \in E(G)$ with $j > i$, $\sigma_{v_iv_j}^{\mathcal{H}}$ is good. This may be seen as clue. Bernshteyn and Kostochka~\cite{BK17} famously showed that if $M$ is the line graph of a $d$-regular graph, then $\chi_{DP}(M) \geq d+1$. In order to prove this result, they construct a $d$-fold cover $\mathcal{H}'$ of $M$ for which there is no $\mathcal{H}'$-coloring, and in their proof, it is easy to observe that $\mathcal{H}'$ is a good prime cover of order $d$, thus confirming the clue for constructing such a cover. Of course, in practice, using such a clue may not be easy. We will give a more detailed use of similar ideas in Proposition~\ref{pro: appcyclesquare} below.

\subsection{DP-Coloring the Cone of a Graph}\label{cone}

In 2008 Schauz gave a generalization of the Combinatorial Nullstellensatz (see~\cite{S12, S14}).  The generalization is called Quantitative Combinatorial Nullstellensatz, and we will show how this generalization along with Theorem~\ref{thm: null} can be applied to DP-coloring of the cone of certain graphs. The \emph{cone of a graph} $G$ is defined to be $K_1 \vee G$, and the vertex corresponding to the $K_1$ is called the \emph{universal vertex}. In this section, we will always denote the universal vertex with $v_1$.

\begin{thm} [Quantitative Combinatorial Nullstellensatz~\cite{S12, S14}] \label{thm: quantnull}
Let $P_1, P_2, \ldots, P_n$ be finite nonempty subsets of a field $\mathbb{F}$ and $\mathcal{P} = \prod_{i=1}^n P_i$.  Suppose for $j \in [n]$, $d_j = |P_j|-1$.  For every $P(x_1, \ldots, x_n) \in \mathbb{F}[x_1, \ldots, x_n]$ of degree at most $\sum_{j=1}^n d_j$,
$$ \left [ \prod_{i=1}^n x_i^{d_i} \right]_P = \sum_{(p_1, p_2, \ldots, p_n) \in \mathcal{P}} N(p_1, p_2, \ldots, p_n)^{-1} P(p_1, p_2, \ldots, p_n)$$
where
$$N(p_1, p_2, \ldots, p_n) = \prod_{j=1}^n \left(\prod_{\epsilon \in P_j - \{p_j \}} (p_j - \epsilon) \right).$$
Note:  If $|P_j|=1$, we take $\prod_{\epsilon \in P_j - \{p_j \}} (p_j - \epsilon)$ to equal 1.
\end{thm}

\begin{pro} \label{pro: bipartite1}
Let $G$ be a connected bipartite graph with $|V(G)| =|E(G)|$. Let $G' = K_1 \vee G$.
Then, there is an $\mathcal{H}$-coloring of $G'$ for every good prime $f$-cover $\mathcal{H}$ with $f(v_1)=1$ for the universal vertex, and $f(v)=3$ for all $v \in V(G)$.
\end{pro}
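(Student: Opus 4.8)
The plan is to apply Theorem~\ref{thm: null} directly. Write $m = |V(G)| = |E(G)|$ and $V(G') = \{v_1, \ldots, v_{m+1}\}$ with $v_1$ the universal vertex, so that $|E(G')| = 2m$ and $f_{G'}$ is homogeneous of degree $2m$. Since $\max_v f(v) = 3$ is prime, the relevant order is $t = 3$, and I would view $f_{G'} \in \mathbb{F}_3[x_1, \ldots, x_{m+1}]$. I would use the exponent vector $t_1 = 0$ and $t_i = 2$ for $2 \le i \le m+1$. Then $|L(v_1)| = 1 > 0 = t_1$, $|L(v_i)| = 3 > 2 = t_i$, and $\sum_i t_i = 2m = |E(G')|$, so once we verify $[\prod_{i=2}^{m+1} x_i^2]_{f_{G'}} \neq 0$ in $\mathbb{F}_3$, all hypotheses of Theorem~\ref{thm: null} are met (recall $\mathcal{H}$ is assumed to be a good prime cover). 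Everything therefore reduces to computing this one coefficient modulo $3$.

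To compute it I would invoke Alon and Tarsi's identity $\diff(D) = \bigl|[\prod_i x_i^{d_D^+(v_i)}]_{f_{G'}}\bigr|$, which holds over $\R$ and hence pins down the integer coefficient up to sign before reduction mod $3$. First I would record the structure of $G$: a connected graph with $|V(G)| = |E(G)|$ has cyclomatic number $1$, so it is unicyclic, and since $G$ is bipartite its unique cycle $C$ has even length. I would then build an orientation $D$ of $G'$ realizing the exponent vector above: orient every cone edge $v_iv_1$ toward $v_1$, orient $C$ cyclically, and orient each tree hanging off $C$ with all its edges pointing toward $C$. This makes $d_D^+(v_1) = 0$, while each $v_i$ ($i \ge 2$) has outdegree $1$ inside $G$ together with its cone edge into $v_1$, giving $d_D^+(v_i) = 2$, exactly as required.

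Next I would count the circulations of $D$. Because $v_1$ is a sink, no circulation can use a cone edge, since a circulation $E$ forces $d^-_E(v_1) = d^+_E(v_1) = 0$; hence every circulation lives inside the orientation of $G$, in which each vertex has outdegree exactly $1$. In such an orientation of a unicyclic graph the unique directed cycle is $C$ itself, so the only circulations are $\emptyset$ and $C$. Both have an even number of edges ($0$ and $|C|$, with $|C|$ even), so there are two even circulations and no odd ones, whence $\diff(D) = 2$. By the Alon--Tarsi identity, $[\prod_{i=2}^{m+1} x_i^2]_{f_{G'}} = \pm 2$, which is nonzero in $\mathbb{F}_3$, and Theorem~\ref{thm: null} then produces the desired $\mathcal{H}$-coloring.

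The main obstacle is precisely this coefficient computation, and the crucial arithmetic point is that $\diff(D) = 2$ is coprime to $3$: the argument is tailored to $\mathbb{F}_3$ and would collapse if the lone cycle forced $\diff$ to be a multiple of the characteristic. Two places deserve care: justifying that the outdegree-$1$ orientation of $G$ has no directed cycles other than $C$ (i.e., that orienting each pendant tree toward $C$ creates no new directed cycle), and confirming that sinking $v_1$ genuinely removes all cone edges from every circulation, so that the count reduces to $G$. Once these are settled, reducing $\pm 2$ modulo $3$ and invoking Theorem~\ref{thm: null} finishes the proof.
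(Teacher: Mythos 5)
Your proposal is correct, but it reaches the key coefficient by a genuinely different route than the paper. The paper applies the Quantitative Combinatorial Nullstellensatz (Theorem~\ref{thm: quantnull}) with $P_1=\{0\}$ and $P_i=\{0,1,2\}$: since $G$ has a unique bipartition, $f_{G'}$ vanishes at all but two points of $\mathcal{P}$, and summing the two evaluations gives $\left[\prod_{i\ge 2} x_i^2\right]_{f_{G'}} = 2(-1)^m$ in $\mathbb{F}_3$ directly. You instead exploit the structure forced by $|V(G)|=|E(G)|$ --- $G$ is unicyclic with an even unique cycle $C$ --- build an orientation with $v_1$ a sink and all other outdegrees $2$, and use the Alon--Tarsi identity over $\R$ to count circulations: your argument that the sink condition excludes all cone edges, and that an outdegree-$1$ orientation of a unicyclic graph with pendant trees directed toward $C$ admits only $\emptyset$ and $C$ as circulations (any nonempty circulation contains a directed cycle, and following out-edges off $C$ strictly approaches $C$, so $C$ is the only one), is sound, giving $\diff(D)=2$ and hence integer coefficient $\pm 2$, nonzero mod $3$. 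Both proofs hinge on the same coefficient and the same arithmetic point that $2$ is a unit in $\mathbb{F}_3$; yours loses the sign (immaterial here) but is more elementary and makes transparent exactly where the even unicyclic structure enters, while the paper's evaluation method is the one that scales: it requires only that the proper colorings in $\mathcal{P}$ be enumerable, which is what powers the extensions to uniquely $3$-colorable graphs (Proposition~\ref{pro: bipartite3}) and to unique list colorings (Proposition~\ref{pro: unique}) later in the paper, settings where no convenient orientation/circulation count is available.
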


\begin{proof}
Let $G$ have the bipartition $X,Y$ with $|X|=m$, $|Y|=n$, and let $X= \{v_2, \ldots, v_{m+1} \}$ and $Y= \{v_{m+2}, \ldots, v_{m+n+1} \}$.
Suppose we view the graph polynomial $f_{G'}$ as a polynomial in $m+n+1$ variables over $\mathbb{F}_3$.  Suppose that $P_1 = \{0 \}$ and $P_i = \{0,1,2\}$ for $i=2,3, \ldots,  m+n+1$.  Let $\mathcal{P} = \prod_{i=1}^n P_i$.  Let $\textbf{y}=(y_1, y_2, \ldots, y_{m+n+1})$ be the element of $\mathcal{P}$ such that $y_1=0$, $y_2=y_3= \cdots = y_{m+1} = 1$, and $y_{m+2}=y_{m+3}= \cdots = y_{m+n+1} = 2$.  Similarly, let $\textbf{z}=(z_1, z_2, \ldots, z_{m+n+1})$ be the element of $\mathcal{P}$ such that $z_1=0$, $z_2=z_3= \cdots = z_{m+1} = 2$, and $z_{m+2}=z_{m+3}= \cdots = z_{m+n+1} = 1$.  Since $G$ has a unique bipartition, $\textbf{y}$ and $\textbf{z}$ are the only elements in $\mathcal{P}$ for which $f$ is nonzero.

Using the notation of Theorem~\ref{thm: quantnull}, we see that $N(\textbf{y})=N(\textbf{z})=(-1)^{m+n}$.  So, Theorem~\ref{thm: quantnull} implies
\begin{align*}
\left [ \prod_{i=2}^{m+n+1} x_i^{2} \right]_{f_{G'}} &= \sum_{(p_1, p_2, \ldots, p_{n+m+1}) \in \mathcal{P}} N(p_1, p_2, \ldots, p_{n+m+1})^{-1} f(p_1, p_2, \ldots, p_{n+m+1}) \\
&= N(\textbf{y})^{-1} f(\textbf{y}) + N(\textbf{z})^{-1} f(\textbf{z}) \\
&= (-1)^{m+n} (-1)^m (-1)^{m+n} + (-1)^{m+n} (-1)^n = 2 (-1)^m.
\end{align*}

So, we have that $\left [ \prod_{i=2}^{m+n+1} x_i^{2} \right]_{f_{G'}} \neq 0$.  Theorem~\ref{thm: null} then implies that there is an $\mathcal{H}$-coloring of $G'$.
\end{proof}

It is worth mentioning that the result of Proposition~\ref{pro: bipartite1} could also be deduced from the weaker result that there is an $\mathcal{H}$-coloring of $G' = K_1 \vee C_{2k+2}$ for every good prime $f$-cover $\mathcal{H}$ with $f(v_1)=1$ for the universal vertex, and $f(v)=3$ for all $v \in V(G') - \{v_1 \}$.  This is because when $G$ is a connected bipartite graph with $|V(G)| =|E(G)|$ and $G'=K_1 \vee G$, deleting all the vertices in $V(G')$ of degree less than 3 results in a copy of $K_1 \vee C_{2k+2}$ for some $k \in \N$.

With this in mind, we will now work toward an application of Proposition~\ref{pro: bipartite1}.  However, before we present the application we need some terminology and a result from~\cite{KM19}.  Suppose $G$ is a graph and $\mathcal{H} = (L,H)$ is an $m$-fold cover of $G$ with $m \geq \chi(G)$.  We say there is a \emph{natural bijection between the $\mathcal{H}$-colorings of $G$ and the proper $m$-colorings of $G$} if for any set $S$ of size $m$ and for each $v \in V(G)$ it is possible to set $L(v) = \{(v,j) : j \in S \}$ so that whenever $uv \in E(G)$, $(u,j)$ and $(v,j)$ are adjacent in $H$ for each $j \in S$.  To see that this definition makes sense, suppose there is a natural bijection between the $\mathcal{H}$-colorings of $G$ and the proper $m$-colorings of $G$.  Then, for each $v \in V(G)$ it is possible to set $L(v) = \{(v,j) : j \in [m] \}$ so that whenever $uv \in E(G)$, $(u,j)$ and $(v,j)$ are adjacent in $H$ for each $j \in [m]$.  Note that if $\mathcal{I}$ is the set of $\mathcal{H}$-colorings of $G$ and $\mathcal{C}$ is the set of proper $m$-colorings of $G$, then the function $f: \mathcal{C} \rightarrow \mathcal{I}$ given by
$$f(c) = \{ (v, c(v)) : v \in V(G) \}$$
is a bijection.

\begin{pro} [\cite{KM19}] \label{pro: treeDP}
Let $T$ be a tree on $n$ vertices and $\mathcal{H} = (L,H)$ be an $m$-fold cover of $T$ such that $m \geq 2$ and $E_H(L(u),L(v))$ is a perfect matching whenever $uv \in E(T)$.  Then, there is a natural bijection between the $\mathcal{H}$-colorings of $T$ and the proper $m$-colorings of $T$.
\end{pro}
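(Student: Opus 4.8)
The plan is to establish directly the renaming condition in the definition of a natural bijection: for any fixed set $S$ with $|S|=m$, I want to produce a bijection $\psi_v : L(v) \to S$ for every $v \in V(T)$ so that, after renaming each vertex $x \in L(v)$ as $(v,\psi_v(x))$, we obtain $L(v) = \{(v,j) : j \in S\}$ and, for every edge $uv \in E(T)$ and every $j \in S$, the vertex $(u,j)$ is adjacent to $(v,j)$ in $H$. Once this relabeling is in hand, the map $f(c) = \{(v,c(v)) : v \in V(T)\}$ is the asserted bijection by the observation recorded just after the definition.

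First I would record that the perfect-matching hypothesis converts each edge into a bijection. Since $\mathcal{H}$ is an $m$-fold cover we have $|L(u)| = |L(v)| = m$, and since $E_H(L(u),L(v))$ is a perfect matching it saturates every vertex of $L(u) \cup L(v)$; hence it determines a bijection $\phi_{uv} : L(u) \to L(v)$ carrying each $x$ to its unique matched partner, with $\phi_{vu} = \phi_{uv}^{-1}$. Matched partners are to receive equal second coordinates after renaming, so the family $\{\psi_v\}$ I seek must satisfy $\psi_u(x) = \psi_v(\phi_{uv}(x))$ for every edge $uv$ and every $x \in L(u)$, equivalently $\psi_v = \psi_u \circ \phi_{vu}$.

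The key step is to construct the $\psi_v$ by propagating a single arbitrary choice through the tree. Root $T$ at a vertex $r$ and fix any bijection $\psi_r : L(r) \to S$. Processing the remaining vertices in order of increasing distance from $r$, each non-root $v$ has a unique parent $u$, which has already been labeled, and I set $\psi_v := \psi_u \circ \phi_{vu}$, a composition of bijections and hence itself a bijection $L(v) \to S$. After renaming, $L(v) = \{(v,j) : j \in S\}$ for every $v$, and the defining relation $\psi_v = \psi_u \circ \phi_{vu}$ guarantees that along each tree edge $uv$ the matched partners share a second coordinate, i.e. $(u,j)$ is adjacent to $(v,j)$ for each $j \in S$. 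This is precisely the condition required, and it holds for every set $S$ since no structure on $S$ is used.

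The only real obstacle is the consistency of this propagation, and it is dissolved entirely by the tree structure: because $T$ is acyclic, every vertex other than $r$ has exactly one parent, so each $\psi_v$ is assigned once and the recursion is well-defined, with no vertex reachable by two distinct edge-paths that could impose conflicting constraints. On a graph containing a cycle the same procedure would force $\psi$ to return to its initial value after traversing the cycle, which demands that the composite matching bijection around the cycle be the identity, a condition that can fail; the absence of cycles in $T$ is exactly what removes this obstruction, and it is the sole place where the hypothesis that $T$ is a tree is needed.
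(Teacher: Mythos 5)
Your proof is correct, and it is the standard argument: the paper itself does not prove Proposition~\ref{pro: treeDP} but imports it from~\cite{KM19}, whose proof proceeds by the same device of rooting the tree and propagating a labeling along the perfect-matching bijections, with acyclicity guaranteeing consistency. Your write-up also correctly identifies the one point needing care (each non-root vertex is labeled exactly once via its unique parent) and correctly defers the final bijection $c \mapsto \{(v,c(v)) : v \in V(T)\}$ to the observation the paper records immediately after the definition of a natural bijection.
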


Suppose $G$ is a graph, and let $M = K_1 \vee G$. Suppose that $v_1$ is the universal vertex of $M$, and suppose that $\mathcal{H} = (L,H)$ is a cover of $M$.  Also, suppose that for fixed $l,m \in \N$ satisfying $l \leq m$, $L(v_1) = \{(v_1,j) : j \in \{0, \ldots, l-1 \} \}$ and $L(v) = \{(v,j) : j \in \{0, \ldots, m-1 \} \}$ for each $v \in V(G)$.  We refer to the edges of $H$ connecting distinct parts of the partition $\{L(v) : v \in V(M) \}$ as \emph{cross-edges}.  For each $j \in \{0, \ldots, l-1 \}$ and $v \in V(G)$, let
$$H^{(j)} = H - N_H[(v_1,j)] \; \; \text{and} \; \; L^{(j)}(v) = L(v) -  N_H((v_1,j)).$$
Now, suppose for each $v \in V(G)$, $|E_H(L(v_1),L(v))|=l$ (i.e., $E_H(L(v_1),L(v))$ is as large as possible).  Then, $\mathcal{H}^{(j)} = (L^{(j)},H^{(j)})$ is an $(m-1)$-fold cover of $G$.  We say that $(v_1,t) \in L(v_1)$ is a \emph{level vertex} if $H^{(t)}$ contains precisely $|E(G)|(m-1)$ cross-edges (i.e., $H^{(t)}$ contains the maximum possible number of cross-edges).

With this terminology and Proposition~\ref{pro: treeDP} in mind, we are ready to present an application of Proposition~\ref{pro: bipartite1}.

\begin{customthm} {\bf \ref{thm:bipartite}}
Let $G$ be a connected bipartite graph with $|V(G)| =|E(G)|$. Then, $G'=K_1 \vee G$ is $f$-DP-colorable with $f(v_1)=2$ for the universal vertex, and $f(v)=3$ for all $v \in V(G')- \{v_1\}$.
\end{customthm}

As discussed following the proof of Proposition~\ref{pro: bipartite1}, when $G$ is a connected bipartite graph with $|V(G)| =|E(G)|$ and $G'=K_1 \vee G$, deleting all the vertices in $V(G')$ of degree less than 3 results in a copy of $K_1 \vee C_{2k+2}$ for some $k \in \N$. Hence, to complete the proof of Theorem~\ref{thm:bipartite}, it is enough to prove the following statement.

\begin{pro} \label{pro: bipartite2}
Let $G' = K_1 \vee C_{2k+2}$. Then, $G'$ is $f$-DP-colorable with $f(v_1)=2$ for the universal vertex, and $f(v)=3$ for all $v \in V(G')- \{v_1\}$.
\end{pro}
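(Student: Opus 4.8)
The plan is to fix an arbitrary $f$-cover $\mathcal{H} = (L,H)$ of $G' = K_1 \vee C_{2k+2}$ with $f(v_1)=2$ and $f(v)=3$ on the cycle, and to produce an $\mathcal{H}$-coloring. First I would reduce to the case of a \emph{full} cover: whenever a matching $E_H(L(u),L(w))$ is not maximal I would add cross-edges to enlarge it (to size $2$ on the spokes $v_1u_i$ and to a perfect matching of size $3$ on the cycle edges), which only adds edges to $H$; a coloring of the enlarged cover is then also a coloring of $\mathcal{H}$, so it suffices to treat full covers. Writing the cycle as $u_1u_2\cdots u_{2k+2}$ and $L(v_1)=\{(v_1,0),(v_1,1)\}$, I would record for each $i$ the color $a_i$ of $L(u_i)$ matched to $(v_1,0)$, the color $b_i$ matched to $(v_1,1)$, the remaining (unmatched) color $c_i$, and write $\pi_i$ for the bijection $\{0,1,2\}\to\{0,1,2\}$ whose graph is the perfect matching $E_H(L(u_i),L(u_{i+1}))$ (indices mod $2k+2$). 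Choosing a color $t\in\{0,1\}$ for $v_1$ deletes $N_H[(v_1,t)]$ and leaves a \emph{$2$-fold} cover $\mathcal{H}^{(t)}$ of the cycle, so it suffices to show that $\mathcal{H}^{(t)}$ is colorable for at least one $t$.

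Next I would split on whether $(v_1,0)$ and $(v_1,1)$ are level vertices. A direct check shows that under the choice $t=0$ the edge $u_iu_{i+1}$ keeps a \emph{perfect} matching in $\mathcal{H}^{(0)}$ precisely when $\pi_i(a_i)=a_{i+1}$, and similarly $t=1$ keeps it perfect precisely when $\pi_i(b_i)=b_{i+1}$; that is, $(v_1,t)$ is a level vertex iff the corresponding thread is $\pi$-consistent all the way around the cycle. If at least one of $(v_1,0),(v_1,1)$ is \emph{not} a level vertex, say $(v_1,0)$, then $\mathcal{H}^{(0)}$ is a $2$-fold cover of $C_{2k+2}$ with at least one non-perfect (size $\le 1$) edge. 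I would cut the cycle at such an edge to obtain a path whose remaining edges are bijective $2$-fold matchings; fixing a color at one end then forces a unique proper coloring along the path, so the path has exactly two proper colorings, which are complementary and hence realize both possible pairs at the endpoints of the cut edge. Since the cut edge forbids at most one pair, one of the two colorings survives it, giving an $\mathcal{H}^{(0)}$-coloring and therefore an $\mathcal{H}$-coloring with $c(v_1)=0$. This step is exactly where Proposition~\ref{pro: treeDP} could be invoked on the path.

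It remains to handle the case where \emph{both} $(v_1,0)$ and $(v_1,1)$ are level vertices. Then $\pi_i(a_i)=a_{i+1}$ and $\pi_i(b_i)=b_{i+1}$ for every $i$, and since each $\pi_i$ is a bijection of $\{0,1,2\}$ this forces $\pi_i(c_i)=c_{i+1}$ as well. Relabeling the three colors of each $L(u_i)$ by $a_i\mapsto 0$, $b_i\mapsto 1$, $c_i\mapsto 2$ turns every $\pi_i$ into the identity, so each cycle saturation function acquires the constant difference $0$ and is good, while the spoke saturation functions are automatically good (after fixing $v_1$ their domains have size at most one). Hence, after deleting $(v_1,1)$ from $L(v_1)$, the resulting cover of $K_1\vee C_{2k+2}$ is a good prime cover of order $3$ with $f(v_1)=1$ and $f(v)=3$ on the cycle, and Proposition~\ref{pro: bipartite1} produces an $\mathcal{H}$-coloring. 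Equivalently, one can finish by hand: with $c(v_1)=0$ each $u_i$ must avoid $a_i$ and the cycle constraint becomes ``adjacent vertices differ,'' so any proper $2$-coloring of the even cycle $C_{2k+2}$ in the two surviving colors suffices.

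The main obstacle is the both-level case: neither single choice for $v_1$ visibly breaks the cycle, and one must rule out the possibility that both reduced $2$-fold covers are ``full twists.'' The crux is the rigidity that simultaneously forces all three color-threads to be $\pi$-consistent, which diagonalizes the cycle cover and collapses it to ordinary proper coloring of an even cycle. This is also the point where having $f(v_1)=2$ rather than $f(v_1)=1$ is essential: the second color for the universal vertex is precisely what lets us sidestep a twisted cycle cover that a single color could not.
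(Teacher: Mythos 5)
Your proposal follows the same strategy as the paper's proof: maximize all matchings, classify the two vertices of $L(v_1)$ according to whether they are level, reduce the both-level (rigid) case to Proposition~\ref{pro: bipartite1} after deleting one vertex of $L(v_1)$, and in the non-level case color the resulting $2$-fold cover of the cycle by exploiting a deficient edge. Your thread formalism ($a_i$, $b_i$, $c_i$, $\pi_i$) is a transparent substitute for the paper's normalization via Proposition~\ref{pro: treeDP}, and your observation that both vertices of $L(v_1)$ being level forces the $c$-thread to be consistent as well (so that everything diagonalizes to a good cover) is exactly the contrapositive of the paper's step ``if the seam matching is bad, at most one vertex of $L(v_1)$ is level.''

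One step is stated too strongly, however. When $(v_1,0)$ is not level, you cut the cycle at an edge of $\mathcal{H}^{(0)}$ whose matching has size at most $1$ and assert that the \emph{remaining} path edges carry bijective $2$-fold matchings. That need not hold: non-levelness guarantees at least one edge where the $a$-thread breaks, i.e., $\pi_i(a_i)\neq a_{i+1}$, but the thread can break at several edges, and every such edge leaves a matching of size exactly $1$ in $\mathcal{H}^{(0)}$. Your forced-propagation count of ``exactly two complementary colorings of the path'' then fails, since propagation is no longer unique. The repair is routine: either complete every other deficient matching to a bijection (this only adds constraints, so any coloring of the completed cover is a coloring of $\mathcal{H}^{(0)}$) and then run your two-colorings argument, or do what the paper does --- pick a vertex $(v_q,i)$ with no neighbor across the chosen deficient edge and greedily color the cycle starting from it, which requires no assumption on the remaining matchings. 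With that one-line fix your argument is complete.
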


\begin{proof}
We label the vertices of the copy of $C_{2k+2}$ used to form $G'$ (in cyclic order) as: $v_2, v_3, \ldots, v_{2k+3}$.
For the sake of contradiction, suppose that $\mathcal{H} = (L,H)$ is a prime $f$-cover of $G'$ such that there is no $\mathcal{H}$-coloring of $G'$.  We may suppose that for each $v_iv_j \in E(G')$, $|E_H(L(v_i),L(v_j))|$ is as large as possible.  Let $\mathcal{H}' = (L',H')$ be the 3-fold cover for the path, $P$, with vertices $v_2, v_3, \ldots, v_{2k+3}$, given by $H' = H[ \bigcup_{j=2}^{2k+3} L(v_j) ]-E_H(L(v_2),L(v_{2k+3}))$ and $L'(v_j) = L(v_j)$ for each $j \in \{2,3, \ldots, 2k+3 \}$.  For each $j \in \{2,3, \ldots, 2k+3 \}$ suppose we name the vertices in $L(v_j)$: $(v_j,0), (v_j,1), (v_j,2)$, so that there is a natural bijection between the proper 3-colorings of $P$ and the $\mathcal{H}'$-colorings of $P$ (this is possible by Proposition~\ref{pro: treeDP}).  Also, arbitrarily name the vertices in $L(v_1)$: $(v_1,0), (v_1,1)$.  Now, we have named all the vertices in $V(H)$.

We may assume that $\sigma_{v_2v_{2k+3}}^{\mathcal{H}}$ is bad since otherwise we could delete an element from $L(v_1)$ to obtain a cover of $G'$ satisfying the hypotheses of Proposition~\ref{pro: bipartite1}.  So, we assume without loss of generality that $\{ (v_2,0)(v_{2k+3},0), (v_2,1)(v_{2k+3},2), (v_2,2)(v_{2k+3},1) \} \subset E(H)$.  It is now easy to see that at most one vertex in $L(v_1)$ can be a level vertex.  Suppose that $(v_1,0)$ is not a level vertex, and for $j \in \{2,3, \ldots, 2k+3 \}$ let $L''(v_j) = L(v_j) - N_H((v_1,0))$.  Also, let $H'' = H[ \bigcup_{j=2}^{2k+3} L''(v_j) ]$.  Notice that $\mathcal{H}'' = (L'',H'')$ is a cover for the copy of $C_{2k+2}$, $C$, used to form $G'$.  Since $(v_1,0)$ is not a level vertex, there exists $r, q \in \{2,3, \ldots, 2k+3 \}$ such that $v_rv_q \in E(G')$ and $|E_{H''}(L''(v_r), L''(v_q))| \leq 1$.  So, there is a $(v_q, i) \in L''(v_q)$ that is not adjacent in $H$ to any elements of $L''(v_r)$.  It is therefore possible to greedily construct an $\mathcal{H}''$-coloring $I$ of $C$ that contains $(v_q,i)$.  Then, $I \cup \{(v_1,0) \}$ is an $\mathcal{H}$-coloring of $G$ which is a contradiction.
\end{proof}


\subsection{Uniquely Colorable Graphs}\label{unique}

We will now prove a generalization of Proposition~\ref{pro: bipartite1}.  We begin with some terminology.  A graph $G$ is said to be \emph{uniquely $k$-colorable} if there is only one partition of its vertex set into $k$ color classes.  From this point forward if $G$ is a uniquely $k$-colorable graph, we always let $\{I_1, I_2, \ldots, I_k \}$ be the unique partition of $V(G)$ into $k$ color classes.  Also, for each $i \in [k]$, we let $n_i = |I_i|$, and for each $1 \leq i < j \leq k$, we let $m_{i,j} = |E_G(I_i, I_j)|$.

\begin{pro} \label{pro: bipartite3}
Suppose $G$ is a uniquely 3-colorable graph with $2(n_1 + n_2 + n_3) = |E(G)|$, $n_2 + m_{1,3} \equiv 0$, $n_3 + m_{1,2} \equiv 1$, and $n_1 + m_{2,3} \equiv 2 \;(\mod \; 3)$.  Let $G' = K_1 \vee G$. Then, there is an $\mathcal{H}$-coloring of $G'$ for every good prime $f$-cover $\mathcal{H}$ of order 4 with $f(v_1)=1$ for the universal vertex, and $f(v)=4$ for all $v \in V(G)$.
\end{pro}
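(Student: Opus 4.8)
The plan is to reduce the statement to a single coefficient computation over $\mathbb{F}_4$ and then invoke Theorem~\ref{thm: null}. Since $f(v_1)=1$ forces $t_1=0$ and $f(v)=4$ allows $t_i\le 3$ for each $v\in V(G)$, and since $\deg f_{G'}=|E(G')|=|E(G)|+|V(G)|=3(n_1+n_2+n_3)$ by the hypothesis $2(n_1+n_2+n_3)=|E(G)|$, the only admissible choice saturating the degree is $t_1=0$ and $t_i=3$ for all $i\ge 2$. Thus it suffices to show, viewing $f_{G'}\in\mathbb{F}_4[x_1,\dots,x_N]$ with $N=1+n_1+n_2+n_3$, that
$$\left[\prod_{i=2}^{N}x_i^{3}\right]_{f_{G'}}\neq 0,$$
for then $|L(v_1)|=1>0=t_1$ and $|L(v)|=4>3=t_i$ let Theorem~\ref{thm: null} produce the desired $\mathcal{H}$-coloring for every good prime cover of order $4$.

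I would extract this coefficient via the Quantitative Combinatorial Nullstellensatz (Theorem~\ref{thm: quantnull}) with $P_1=\{0\}$ and $P_i=\mathbb{F}_4$ for $i\ge 2$, so that $d_1=0$ and $d_i=3$ and the degree hypothesis is met. The first simplification is that $N(\mathbf{p})=1$ for every evaluation point: the factor for $v_1$ is the empty product $1$, and for each $i\ge 2$ one has $\prod_{\epsilon\in\mathbb{F}_4\setminus\{p_i\}}(p_i-\epsilon)=1$ (this is the derivative of $X^4-X$ at $p_i$, which equals $-1=1$ in characteristic $2$). Hence the coefficient equals $\sum_{\mathbf{p}} f_{G'}(\mathbf{p})$ over $\mathbf{p}\in\{0\}\times\mathbb{F}_4^{\,N-1}$. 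Only proper colorings of $G'$ contribute; since $v_1$ is adjacent to every vertex of $G$ and is forced to color $0$, all other vertices must use $\mathbb{F}_4^{\times}=\{1,\omega,\omega^2\}$, and because $G$ is uniquely $3$-colorable these are exactly the $3!=6$ bijections assigning $\{1,\omega,\omega^2\}$ to the classes $\{I_1,I_2,I_3\}$.

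For each such coloring, writing $c_a$ for the color on $I_a$, I would factor (all signs vanishing in characteristic $2$) $f_{G'}(\mathbf{p})$ over the $v_1$-edges and the $I_a$--$I_b$ edges, giving
$$f_{G'}(\mathbf{p})=c_1^{\,n_1}c_2^{\,n_2}c_3^{\,n_3}\,(c_1+c_2)^{m_{1,2}}(c_1+c_3)^{m_{1,3}}(c_2+c_3)^{m_{2,3}}.$$
The crucial arithmetic fact is that in $\mathbb{F}_4$ the sum of two distinct nonzero elements is the third, so $c_a+c_b=c_d$ where $d$ is the remaining index; this collapses the expression to $c_1^{\,n_1+m_{2,3}}c_2^{\,n_2+m_{1,3}}c_3^{\,n_3+m_{1,2}}$. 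Since each $c_a$ has multiplicative order dividing $3$, the exponents matter only modulo $3$, and here the three congruence hypotheses enter, reducing the value to $c_1^{2}c_2^{0}c_3^{1}=c_1^{2}c_3$. Summing $c_1^2 c_3$ over the six permutations of $(1,\omega,\omega^2)$ yields $3(\omega+\omega^2)=\omega+\omega^2=1\neq 0$ in $\mathbb{F}_4$, completing the argument. The main obstacle is precisely this final summation: the proposition's seemingly arbitrary congruences on $n_i$ and $m_{i,j}$ are exactly calibrated so that the six contributions do not cancel, and the delicate point is verifying that this specific residue pattern $(2,0,1)$ produces a nonzero sum rather than $0$.
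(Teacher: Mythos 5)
Your proof is correct and is essentially the paper's own argument: both apply the Quantitative Combinatorial Nullstellensatz with $P_1=\{0\}$ and $P_i=\mathbb{F}_4$, observe $N(\mathbf{p})=1$, reduce the sum to the six proper colorings guaranteed by unique $3$-colorability, use the congruences mod $3$ to collapse exponents, and conclude the coefficient of $\prod_{i\ge 2}x_i^3$ equals $1\neq 0$ before invoking Theorem~\ref{thm: null}. Your only departures are cosmetic but welcome: you justify explicitly why $(t_1,\dots,t_N)=(0,3,\dots,3)$ is forced by degree saturation, and you collapse the six-term sum symbolically via $c_a+c_b=c_d$ and $c^3=1$ to $\sum c_1^2c_3$, where the paper instead expands the six terms by hand.
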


\begin{proof}
We view $f_{G'}$ as a polynomial in $n_1+n_2+n_3+1$ variables over $\mathbb{F}_4 = \{0,1,x,x+1\}$.  Clearly, $f_{G'}$ is a polynomial of degree at most $|E(G')|=3(n_1+n_2+n_3)$.  Suppose that $P_1 = \{0 \}$ and $P_i = \{0, 1, x, x+1\}$ for $i=2,3, \ldots,  n_1+n_2+n_3+1$.  Let $\mathcal{P} = \prod_{i=1}^n P_i$.  Let $I_1= \{v_2, \ldots, v_{n_1+1} \}$, $I_2= \{v_{n_1+2}, \ldots, v_{n_1+n_2+1} \}$, and $I_3 = \{v_{n_1+n_2+2}, \ldots, v_{n_1 + n_2 + n_3 + 1}\}$, be the three unique color classes of $G$.

For each $i \in [6]$ let $f_i : \{1,2,3 \} \rightarrow \{1, x, x+1 \}$ be the bijective function such that: $f_1(1)=1$, $f_1(2) = x$, $f_1(3) = x+1$, $f_2(1)=1$, $f_2(2) = x+1$, $f_2(3) = x$, $f_3(1)=x$, $f_3(2) = 1$, $f_3(3) = x+1$, $f_4(1)=x$, $f_4(2) = x+1$, $f_4(3) = 1$, $f_5(1)=x+1$, $f_5(2) = 1$, $f_5(3) = x$, $f_6(1)=x+1$, $f_6(2) = x$, and $f_6(3) = 1$.  Now, for each $i \in [6]$, let $\textbf{y}_i=(y_1, y_2, \ldots, y_{n_1+n_2+n_3+1})$ be the element of $\mathcal{P}$ such that $y_1=0$, $y_2=y_3= \cdots = y_{n_1+1} = f_i(1)$, $y_{n_1+2}=y_{n_1+3}= \cdots = y_{n_1+n_2+1} = f_i(2)$, and $y_{n_1+n_2+2}=y_{n_1+n_2+3}= \cdots = y_{n_1+n_2+n_3+1} = f_i(3)$.  Since $G$ is uniquely 3-colorable, $\textbf{y}_1, \textbf{y}_2, \ldots, \textbf{y}_6$ are the only elements in $\mathcal{P}$ for which $f_{G'}$ is nonzero.

Using the notation of Theorem~\ref{thm: quantnull}, we see that for each $i \in [6]$, $N(\textbf{y}_i)=(-1)^{n_1+n_2+n_3} = 1$.  So, Theorem~\ref{thm: quantnull} and the fact that $n_2 + m_{1,3} \equiv 0$, $n_3 + m_{1,2} \equiv 1$, and $n_1 + m_{2,3} \equiv 2 \; (\mod \; 3)$ implies
\begin{align*}
&\left [ \prod_{i=2}^{n_1+n_2+n_3+1} x_i^{3} \right]_{f_{G'}} \\
&= \sum_{(p_1, p_2, \ldots, p_{n_1+n_2+n_3+1}) \in \mathcal{P}} N(p_1, p_2, \ldots, p_{n_1+n_2+n_3+1})^{-1} f_{G'}(p_1, p_2, \ldots, p_{n_1+n_2+n_3+1}) \\
&= \sum_{i=1}^6 f_{G'}(\textbf{y}_i)\\
&= x^{n_2 + m_{1,3}}(x+1)^{n_3+m_{1,2}} + x^{n_3 + m_{1,2}}(x+1)^{n_2+m_{1,3}} + x^{n_1 + m_{2,3}}(x+1)^{n_3+m_{1,2}} \\
&+ x^{n_1 + m_{2,3}}(x+1)^{n_2+m_{1,3}} + x^{n_3 + m_{1,2}}(x+1)^{n_1+m_{2,3}} + x^{n_2 + m_{1,3}}(x+1)^{n_1+m_{2,3}} \\
&= (1)(x+1) + x(1) + (x+1)^2 + (x+1)(1) + x^2 + (1)(x) \\
&= x + x+1 = 1.
\end{align*}

So, we have that $\left [ \prod_{i=2}^{n_1+n_2+n_3+1} x_i^{3} \right]_{f_{G'}} \neq 0$.  Theorem~\ref{thm: null} then implies that there is an $\mathcal{H}$-coloring of $G'$.
\end{proof}

For a simple application of Proposition~\ref{pro: bipartite3}, consider $G = \overline{K_2} \vee P_5$. In $G$, let $v_1, v_2$ be the vertices of the copy of $\overline{K_2}$, and let the vertices of the copy of $P_5$ (in order) be: $u_1, u_2, u_3, u_4, u_5$.  Clearly, $G$ is uniquely 3-colorable, and if we let $I_1 = \{u_2, u_4 \}$, $I_2 = \{v_1, v_2 \}$, and $I_3 = \{u_1, u_3, u_5 \}$, then it is clear that the hypotheses of Proposition~\ref{pro: bipartite3} are satisfied.

We will now present a DP-coloring analogue of a result that appears in~\cite{AM06}, where it is shown that existence of an appropriate list assignment that leads to a unique coloring of $G$, implies $f$-choosability of $G$ for $\sum_{v \in V(G)} f(v) = |V(G)| + |E(G)|$ (for a nontrivial application of this result in~\cite{AM06} see~\cite{KM17}).

\begin{pro} \label{pro: unique}
Let $G$ be a graph, and let $f: V(G) \rightarrow \N$ be such that $\sum_{v \in V(G)} f(v) = |V(G)| + |E(G)|$.
If $P$ is a list assignment of $G$ such that for each $v \in V(G)$, $|P(v)|=f(v)$ and $P(v) \subseteq \mathbb{F}_t$ for some prime power $t$, and there is a unique proper $P$-coloring of $G$, then there is an $\mathcal{H}$-coloring of $G$ for every good prime $f$-cover $\mathcal{H}$ of order $t$.
\end{pro}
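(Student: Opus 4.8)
The plan is to connect the hypothesis about unique list-colorability to the coefficient condition in Theorem~\ref{thm: null} by means of the Quantitative Combinatorial Nullstellensatz (Theorem~\ref{thm: quantnull}). Let $V(G) = \{v_1, \ldots, v_n\}$, and view $f_G$ as a polynomial over $\mathbb{F}_t$. For each $i$, set $P_i = P(v_i) \subseteq \mathbb{F}_t$, so that $|P_i| = f(v_i)$ and $d_i := |P_i| - 1 = f(v_i) - 1$. The degree bound needed to apply Theorem~\ref{thm: quantnull} is exactly $\deg f_G = |E(G)| = \sum_{v} f(v) - |V(G)| = \sum_{i=1}^n (f(v_i) - 1) = \sum_{i=1}^n d_i$, which matches the hypothesis $\sum_{v \in V(G)} f(v) = |V(G)| + |E(G)|$. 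So the coefficient $[\prod_{i=1}^n x_i^{d_i}]_{f_G}$ is computable by the formula in Theorem~\ref{thm: quantnull}.

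First I would observe that for $(p_1, \ldots, p_n) \in \mathcal{P} = \prod_i P_i$, the evaluation $f_G(p_1, \ldots, p_n)$ is nonzero precisely when $(p_1, \ldots, p_n)$ is a proper $P$-coloring of $G$ (here I use that $f_G$, the product of the $(x_i - x_j)$ over edges, is nonzero in the integral domain $\mathbb{F}_t$ iff no edge has its two endpoints colored alike). By the unique-colorability hypothesis there is exactly one such tuple, say $\mathbf{c} = (c_1, \ldots, c_n)$. Hence in the Quantitative Combinatorial Nullstellensatz sum every term vanishes except the one at $\mathbf{c}$, giving
$$\left[\prod_{i=1}^n x_i^{d_i}\right]_{f_G} = N(\mathbf{c})^{-1} f_G(\mathbf{c}).$$
Both factors are nonzero: $f_G(\mathbf{c}) \neq 0$ since $\mathbf{c}$ is a proper coloring, and $N(\mathbf{c}) = \prod_{j=1}^n \prod_{\epsilon \in P_j - \{c_j\}} (c_j - \epsilon)$ is a product of nonzero elements of the field $\mathbb{F}_t$, hence a unit. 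Therefore $[\prod_{i=1}^n x_i^{d_i}]_{f_G} \neq 0$.

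With this coefficient established to be nonzero, the result is immediate from Theorem~\ref{thm: null}: taking $t_i = d_i = f(v_i) - 1$, the hypothesis $|L(v_i)| = f(v_i) > t_i$ holds for every good prime $f$-cover $\mathcal{H}$ of order $t$, and $[\prod_{i=1}^n x_i^{t_i}]_{f_G} \neq 0$, so Theorem~\ref{thm: null} yields an $\mathcal{H}$-coloring of $G$.

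I do not anticipate a serious obstacle here; the argument is essentially a clean specialization of Theorem~\ref{thm: quantnull} to a single surviving summand, and the only point requiring mild care is verifying that $N(\mathbf{c})$ is a unit in $\mathbb{F}_t$ — but this is automatic since its factors are differences of distinct field elements, which are nonzero, and $\mathbb{F}_t$ is a field. The step that carries the real content is recognizing that the degree arithmetic $\sum_i d_i = |E(G)|$ lets one apply the quantitative version at the top degree, so that the coefficient of $\prod x_i^{d_i}$ is literally a weighted count of proper $P$-colorings; uniqueness then does all the work by collapsing the sum to one nonvanishing term.
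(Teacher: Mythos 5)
Your proposal is correct and matches the paper's proof essentially step for step: both apply the Quantitative Combinatorial Nullstellensatz with $P_i = P(v_i)$ and $d_i = f(v_i)-1$, use uniqueness of the proper $P$-coloring to collapse the sum to the single term $N(\mathbf{c})^{-1} f_G(\mathbf{c}) \neq 0$, and then invoke Theorem~\ref{thm: null}. Your added remarks on the degree arithmetic and on $N(\mathbf{c})$ being a unit in $\mathbb{F}_t$ are just slightly more explicit versions of what the paper states.
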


\begin{proof}
 Suppose $V(G) = \{v_1, \ldots, v_n \}$ and $\mathcal{H}$ is a good prime $f$-cover of $G$ of order $t$.  We view $f_G$ as a polynomial in $n$ variables over $\mathbb{F}_t$.  Clearly, $f_{G}$ is a polynomial of degree at most $|E(G)| = \sum_{i=1}^n (|P(v_i)|-1)$.  For $i \in [n]$, let $d_i = |P(v_i)|-1$.  Let $\mathcal{P} = \prod_{i=1}^n P_i$.  Since there is a unique proper $P$-coloring of $G$, we know there is exactly one element in $\mathcal{P}$ at which $f_G$ is nonzero.  Suppose $(a_1, \ldots, a_n) \in \mathcal{P}$ and $f_G(a_1, \ldots, a_n) \neq 0$.  Using the notation of Theorem~\ref{thm: quantnull},
$$ \left[ \prod_{i=1}^n x_i^{d_i} \right]_{f_G} = N(a_1, \ldots, a_n)^{-1} f_G(a_1, \ldots, a_n).$$
Since $\mathbb{F}_t$ is an integral domain, we have that $ N(a_1, \ldots, a_n)^{-1} f_G(a_1, \ldots, a_n) \neq 0$.  Theorem~\ref{thm: null} then implies that there is an $\mathcal{H}$-coloring of $G$.
\end{proof}

We can use Proposition~\ref{pro: unique} to prove a slightly stronger (albeit obvious~\footnote{Corollary~\ref{cor: tree} can be easily proven with greedy coloring.  Our purpose in the proof of Corollary~\ref{cor: tree} is to give a very simple application of Proposition~\ref{pro: unique}.}) version of Proposition~\ref{pro: tree}.  The result makes use of the fact that if $T$ is a tree, there is an ordering of the elements of $V(T)$, $v_1, \ldots, v_n$, so that for each $i \in \{2, \ldots, n \}$, $v_i$ has exactly one neighbor among $v_1, \ldots, v_{i-1}$.

\begin{cor} \label{cor: tree}
Let $T$ be a tree on $n$ vertices with at least one edge. Suppose $v_1, \ldots, v_n$ is an ordering of the elements of $V(T)$ so that for each $i \in \{2, \ldots, n \}$, $v_i$ has exactly one neighbor among $v_1, \ldots, v_{i-1}$.  Then, $T$ is $f$-DP-colorable for $f(v_1)=1$ and $f(v_i)=2$ for each $i \in \{2, \ldots, n \}$.
\end{cor}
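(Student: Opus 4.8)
The plan is to invoke Proposition~\ref{pro: unique} with the prime power $t=2$, so that $\mathbb{F}_t = \mathbb{F}_2 = \{0,1\}$. First I would verify the arithmetic hypothesis of that proposition. Since $T$ is a tree on $n$ vertices we have $|E(T)| = n-1$, and the proposed weighting gives $\sum_{v \in V(T)} f(v) = 1 + 2(n-1) = 2n-1 = n + (n-1) = |V(T)| + |E(T)|$, exactly as required. Next I would produce a list assignment $P$ over $\mathbb{F}_2$ with $|P(v_i)| = f(v_i)$ admitting a unique proper coloring: take $P(v_1) = \{0\}$ and $P(v_i) = \{0,1\}$ for each $i \in \{2,\ldots,n\}$, so that $P(v) \subseteq \mathbb{F}_2$ for all $v$.

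The uniqueness of the proper $P$-coloring is where the prescribed ordering does its work. Processing the vertices in the order $v_1, \ldots, v_n$, the color of $v_1$ is forced to be $0$. For each $i \geq 2$, the vertex $v_i$ has exactly one neighbor $v_j$ with $j < i$, whose color has already been determined; since $|P(v_i)| = 2$ and $v_i$ must avoid the single color $c(v_j)$, exactly one color in $P(v_i)$ is admissible, so the color of $v_i$ is forced as well. Hence there is at most one proper $P$-coloring. This forced assignment is genuinely proper because each edge $v_jv_i$ of $T$ (with $j<i$) is checked precisely at the step where $v_i$ is processed. Thus $T$ has a unique proper $P$-coloring, and Proposition~\ref{pro: unique} applies to \emph{every} good prime $f$-cover of $T$ of order $2$.

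The one substantive remaining point, and the place where a little care is needed, is the passage from ``every good prime $f$-cover'' to ``every $f$-cover'', which is what $f$-DP-colorability demands. Since $\max_{v} f(v) = 2 = 2^1$, any $f$-cover $\mathcal{H}=(L,H)$ of $T$ is automatically a prime cover of order $2$, and I name the vertices of $H$ within $\{(v,j): j \in \mathbb{F}_2\}$. For an edge $v_iv_j$ with $j>i$, the domain $A_{v_iv_j}^{\mathcal{H}}$ of the saturation function lies in the second coordinates of $L(v_i)$. If the edge meets $v_1$ then $i=1$, and $|L(v_1)|=1$ forces $|A_{v_1v_j}^{\mathcal{H}}| \le 1$, which is good trivially. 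For an edge between two vertices of list size $2$, a saturated pair of size $2$ gives a bijection of $\{0,1\}$ that is either the identity or the transposition, and in $\mathbb{F}_2$ these yield the constant differences $\beta = 0$ and $\beta = 1$ respectively, while a matching of size at most $1$ is good automatically; this is exactly the observation recorded just before Proposition~\ref{pro: tree}. Hence every $f$-cover of $T$ is good, so Proposition~\ref{pro: unique} furnishes an $\mathcal{H}$-coloring of $T$ for each of them, and $T$ is $f$-DP-colorable. I expect no real obstacle beyond this goodness reduction: the degree-sum identity and the forced-coloring uniqueness argument are routine, and indeed the statement also admits a one-line greedy proof, the purpose here being only to illustrate Proposition~\ref{pro: unique}.
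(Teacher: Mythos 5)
Your proposal is correct and is essentially the paper's own proof: the paper likewise checks $\sum_{v \in V(T)} f(v) = 2n-1 = |V(T)| + |E(T)|$, takes the list assignment $P(v_1) = \{0\}$ and $P(v_i) = \{0,1\}$ with its unique proper coloring, observes that any $f$-cover of $T$ is automatically a good prime $f$-cover of order $2$, and invokes Proposition~\ref{pro: unique}. The only difference is that you spell out details the paper declares clear, namely the forced-coloring uniqueness argument via the given vertex ordering and the fact that every saturation function over $\mathbb{F}_2$ (identity, transposition, or a matching of size at most one) is good.
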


\begin{proof}
Suppose that $\mathcal{H}$ is an arbitrary $f$-cover of $T$.  We know that $\mathcal{H}$ is a good prime $f$-cover of order 2.  Clearly, $\sum_{v \in V(T)} f(v) = 2n-1 = |V(T)| + |E(T)|$.  Let $P$ be the list assignment for $T$ given by $L(v_1) = \{0 \}$ and $L(v_i) = \{0,1\}$ for each $i \in \{2, \ldots, n \}$.  Then, it is clear that there is a unique proper $P$-coloring of $T$.  Proposition~\ref{pro: unique} then implies that there is an $\mathcal{H}$-coloring of $T$.
\end{proof}

\section{Prime Covers of Order 3} \label{three}

As we have seen above, prime covers of order 2 are ideal covers to be working with when applying Theorem~\ref{thm: null} (since when we have a prime cover of order 2, $\mathcal{H}$, $\sigma_{v_iv_j}^{\mathcal{H}}$ is always good, as we saw in Proposition~\ref{pro: tree} and Corollary~\ref{cor: tree}).  In this section, we will see that if we work a bit harder, we can obtain a Theorem that applies to all prime covers of order 3.  Throughout this section, unless otherwise noted, we assume that $G$ is a graph with $V(G) = \{v_1, \ldots, v_n \}$.  We also assume that $\mathcal{H} = (L,H)$ is a prime cover of $G$ of order 3.

\subsection{Dealing with Bad Matchings}

In the context of prime covers of order 3, we have the following observation.

\begin{obs} \label{obs: crucial}
Suppose $\sigma_{v_iv_j}^{\mathcal{H}}$ is bad.  Then there exists $\beta_{i,j} \in \mathbb{F}_3$ such that $a + \sigma_{v_iv_j}^{\mathcal{H}}(a) = \beta_{i,j}$ for each $a \in A_{v_iv_j}^{\mathcal{H}}$.
\end{obs}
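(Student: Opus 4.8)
The plan is to exploit the special arithmetic of $\mathbb{F}_3$: its nonzero elements are precisely $\{1, -1\}$, so any two nonzero elements of $\mathbb{F}_3$ are either equal or negatives of one another. Write $\sigma = \sigma_{v_iv_j}^{\mathcal{H}}$ and $A = A_{v_iv_j}^{\mathcal{H}}$, and for $a \in A$ set $\delta(a) = a - \sigma(a)$ (the ``difference'') and $s(a) = a + \sigma(a)$ (the ``sum''). By definition $\sigma$ is good exactly when $\delta$ is constant on $A$, so what I must show is that if $\delta$ is \emph{not} constant then $s$ \emph{is} constant; the constant value of $s$ is then the desired $\beta_{i,j}$.

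First I would dispose of the trivial sizes. Since $A \subseteq \mathbb{F}_3$ we have $|A| \le 3$, and if $|A| \le 1$ then $\delta$ is constant (vacuously when $A = \emptyset$), so $\sigma$ is good. Hence the hypothesis that $\sigma$ is bad forces $|A| \in \{2, 3\}$, and these are the only cases to handle.

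The key step is a pairwise dichotomy. Because $\sigma$ arises from a matching it is injective, so for distinct $a, a' \in A$ both $a - a'$ and $\sigma(a) - \sigma(a')$ are nonzero, hence each lies in $\{1, -1\}$. Consequently either $a - a' = \sigma(a) - \sigma(a')$, which rearranges to $\delta(a) = \delta(a')$, or $a - a' = -(\sigma(a) - \sigma(a'))$, which rearranges to $s(a) = s(a')$; and since $1 \ne -1$ in $\mathbb{F}_3$, exactly one of these holds for each pair. Badness supplies some pair $a_1 \ne a_2$ with $\delta(a_1) \ne \delta(a_2)$, and the dichotomy immediately gives $s(a_1) = s(a_2)$. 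When $|A| = 2$ this already shows $s$ is constant, finishing that case.

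The one point requiring care -- and the main (small) obstacle -- is the case $|A| = 3$, where $A = \mathbb{F}_3$ and I must promote the equality $s(a_1) = s(a_2)$ from a single pair to all of $A$. Letting $a_3$ be the remaining element, I would argue by contradiction: if $s(a_3)$ differed from the common value $s(a_1) = s(a_2)$, then applying the dichotomy to the pairs $\{a_1, a_3\}$ and $\{a_2, a_3\}$ would force $\delta(a_1) = \delta(a_3)$ and $\delta(a_2) = \delta(a_3)$, whence $\delta(a_1) = \delta(a_2)$, contradicting the choice of $a_1, a_2$. Therefore $s$ is constant on $A$, and taking $\beta_{i,j}$ to be this constant completes the proof.
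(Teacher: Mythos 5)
Your proof is correct and complete. Note that the paper itself supplies no argument here: the statement is labeled an \emph{Observation} and left unproved, so there is no authorial proof to compare against, and your write-up is a valid filling-in of the details the authors treated as immediate. Each step checks out: injectivity of $\sigma_{v_iv_j}^{\mathcal{H}}$ does follow from the matching condition; for distinct $a, a'$ the differences $a-a'$ and $\sigma(a)-\sigma(a')$ both lie in $\{1,-1\}$, so exactly one of $\delta(a)=\delta(a')$ or $s(a)=s(a')$ holds (indeed both together would give $2a = 2a'$, impossible since $2$ is invertible in $\mathbb{F}_3$); and your contradiction argument correctly propagates the sum-equality across all three elements when $|A|=3$. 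A slicker packaging of the same fact, likely the intuition behind the authors' omission of a proof, is structural: every injection between subsets of $\mathbb{F}_3$ is the restriction of either a shift $x \mapsto x - \beta$ or a reflection $x \mapsto \beta - x$, since for $|A| = 3$ the three shifts and three reflections exhaust all six permutations of $\mathbb{F}_3$, while for $|A| \le 2$ one checks directly that one of the two types always fits. Goodness means $\sigma$ is a restricted shift, so badness forces a reflection, which is exactly the conclusion $a + \sigma(a) = \beta_{i,j}$. Your pairwise dichotomy proves the same thing from scratch without invoking the classification of permutations of $\mathbb{F}_3$; the structural view buys brevity and makes transparent why the result is special to order $3$ (for larger fields the shifts and reflections no longer exhaust all permutations, which is why the paper's Section 3 is confined to prime covers of order $3$).
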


Let $\mathcal{M} = \{\sigma_{v_iv_j}^{\mathcal{H}} : v_iv_j \in E(G), j > i \}$.  Let $B: \mathcal{M} \rightarrow \{-1,1\}$, be the function given by
\[ B(\sigma_{v_iv_j}^{\mathcal{H}}) = \begin{cases}
      -1 & \text{if $\sigma_{v_iv_j}^{\mathcal{H}}$ is good} \\
      1 & \text{if $\sigma_{v_iv_j}^{\mathcal{H}}$ is bad.}
   \end{cases}
\]

\begin{thm} \label{thm: null3}
Suppose $G$ is a graph with $V(G) = \{v_1, \ldots, v_n \}$ and $\mathcal{H} = (L,H)$ is a prime cover of $G$ of order $3$ with the vertices of $H$ arbitrarily named so that $L(v) \subseteq \{(v,j) : j \in \mathbb{F}_3 \}$ for each $v \in V(G)$.   Let $f(x_1, \ldots, x_n) \in \mathbb{F}_3[x_1, \ldots, x_n]$ be given by
$$f(x_1, \ldots, x_n) = \prod_{v_iv_j \in E(G), \; j>i} (x_i + B(\sigma_{v_iv_j}^{\mathcal{H}}) x_j).$$
If $[ \prod_{i=1}^n x_i^{t_i}]_f \neq 0$ and $|L(v_i)| > t_i$ for each $i \in [n]$, then there is an $\mathcal{H}$-coloring of $G$.
\end{thm}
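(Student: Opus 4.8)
The plan is to mirror the proof of Theorem~\ref{thm: null} verbatim, the only change being that the single ``good-difference'' linear form $x_i-x_j-\beta_{ij}$ is replaced by a form whose sign is dictated by $B$. The decisive structural fact, special to order $3$, is that over $\mathbb{F}_3$ every saturation function is either a translation or a reflection, so a single sign $B\in\{-1,1\}$ can encode both cases at once. Concretely, if $\sigma_{v_iv_j}^{\mathcal{H}}$ is good ($B=-1$) there is $\beta_{ij}$ with $a-\sigma_{v_iv_j}^{\mathcal{H}}(a)=\beta_{ij}$ on its domain, while if it is bad ($B=1$) Observation~\ref{obs: crucial} gives $\beta_{ij}$ with $a+\sigma_{v_iv_j}^{\mathcal{H}}(a)=\beta_{ij}$. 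In either case the uniform relation
$$ a + B(\sigma_{v_iv_j}^{\mathcal{H}})\,\sigma_{v_iv_j}^{\mathcal{H}}(a) = \beta_{ij} $$
holds for every $a$ in the domain of $\sigma_{v_iv_j}^{\mathcal{H}}$ (choosing $\beta_{ij}$ arbitrarily when the domain is empty).

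With these constants fixed, I would introduce the auxiliary polynomial
$$ \hat{f}(x_1,\ldots,x_n) = \prod_{v_iv_j\in E(G),\; j>i}\left(x_i + B(\sigma_{v_iv_j}^{\mathcal{H}})\,x_j - \beta_{ij}\right)\in\mathbb{F}_3[x_1,\ldots,x_n], $$
together with the sets $P(v)=\{\,j\in\mathbb{F}_3 : (v,j)\in L(v)\,\}$, so that $|P(v_i)|=|L(v_i)|>t_i$. The first step is to check that any $(p_1,\ldots,p_n)\in\prod_i P(v_i)$ with $\hat{f}(p_1,\ldots,p_n)\neq 0$ yields the $\mathcal{H}$-coloring $I=\{(v_i,p_i):i\in[n]\}$. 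Since $I$ meets each part $L(v_i)$ exactly once, it can fail to be independent only through a cross-edge $(v_r,p_r)(v_q,p_q)\in E_H(L(v_r),L(v_q))$ with, say, $r<q$; by the cover axioms such an edge forces $v_rv_q\in E(G)$ and $\sigma_{v_rv_q}^{\mathcal{H}}(p_r)=p_q$, whereupon the uniform relation gives $p_r + B(\sigma_{v_rv_q}^{\mathcal{H}})\,p_q - \beta_{rq}=0$. But that is precisely the factor of $\hat{f}$ indexed by $v_rv_q$ evaluated at $(p_1,\ldots,p_n)$, contradicting $\hat{f}(p_1,\ldots,p_n)\neq 0$. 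Hence $I$ is independent, and so it is an $\mathcal{H}$-coloring.

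The second step produces such a point through the Combinatorial Nullstellensatz. Because $f$ is a product of $|E(G)|$ homogeneous linear forms, it is homogeneous of degree $|E(G)|$, so the hypothesis $[\prod_i x_i^{t_i}]_f\neq 0$ already forces $\sum_i t_i = |E(G)|$. Subtracting the constants $\beta_{ij}$ leaves the top-degree homogeneous part untouched, so $\hat{f}$ has degree $|E(G)|=\sum_i t_i$ and $[\prod_i x_i^{t_i}]_{\hat f}=[\prod_i x_i^{t_i}]_f\neq 0$. Applying Theorem~\ref{thm: combnull} to $\hat{f}$ over $\mathbb{F}_3$ with the sets $P(v_i)$ (legitimate since $|P(v_i)|>t_i$) yields a point at which $\hat{f}$ is nonzero, and the first step converts it into the desired $\mathcal{H}$-coloring.

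I do not expect a genuine obstacle here: the whole argument is a faithful adaptation of Theorem~\ref{thm: null}, and the one new ingredient is the uniform relation $a+B(\sigma)\,\sigma(a)=\beta_{ij}$, which rests entirely on Observation~\ref{obs: crucial}. The only point demanding care is bookkeeping, namely matching the sign $B$ with the correct additive-versus-subtractive constant in each of the good and bad cases and verifying that this is exactly what the factor $x_i+B\,x_j-\beta_{ij}$ records; once that correspondence is pinned down, the Nullstellensatz step is routine.
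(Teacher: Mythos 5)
Your proposal is correct and follows the paper's own proof essentially verbatim: you invoke Observation~\ref{obs: crucial} to get the uniform relation $a + B(\sigma_{v_iv_j}^{\mathcal{H}})\,\sigma_{v_iv_j}^{\mathcal{H}}(a) = \beta_{i,j}$, form the same shifted polynomial $\hat{f}$, show a nonvanishing point yields an $\mathcal{H}$-coloring via the cross-edge contradiction, and finish with the Combinatorial Nullstellensatz using $[\prod_{i=1}^n x_i^{t_i}]_{\hat f}=[\prod_{i=1}^n x_i^{t_i}]_f$. Your added remark that homogeneity of $f$ forces $\sum_i t_i = |E(G)|$ is a small clarification the paper asserts without comment, but the argument is the same.
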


\begin{proof}
For each $v \in V(G)$, let $P(v) = \{ j \in \mathbb{F}_3 : (v,j) \in L(v) \}$.  By Observation~\ref{obs: crucial}, for each $v_iv_j \in E(G)$ with $j > i$, there is a $\beta_{i,j} \in \mathbb{F}_3$ such that $a + B(\sigma_{v_iv_j}^{\mathcal{H}}) \sigma_{v_iv_j}^{\mathcal{H}}(a) - \beta_{i,j} = 0$ for each $a \in A_{v_iv_j}^{\mathcal{H}}$ (note: we arbitrarily choose $\beta_{i,j}$ if $A_{v_iv_j}^{\mathcal{H}} = \emptyset$).  Now, let $\hat{f} \in \mathbb{F}_3[x_1, \ldots, x_n]$ be the polynomial given by:
$$ \hat{f}(x_1, x_2, \ldots, x_n) = \prod_{v_iv_j \in E(G), \; j>i} (x_i + B(\sigma_{v_iv_j}^{\mathcal{H}})x_j - \beta_{ij}).$$
Similar to the analogous result proven in the proof of Theorem~\ref{thm: null}, if $(p_1, p_2, \ldots, p_n) \in \prod_{i=1}^n P(v_i)$ satisfies $\hat{f}(p_1, p_2, \ldots, p_n) \neq 0$, then $I = \{(v_i, p_i) : i \in [n] \}$ is an $\mathcal{H}$-coloring of $G$.

Notice that $f$ and $\hat{f}$ are polynomials of degree $\sum_{i=1}^n t_i = |E(G)|$.  Also, $[ \prod_{i=1}^n x_i^{t_i}]_f = [ \prod_{i=1}^n x_i^{t_i}]_{\hat{f}}$.  So, by the Combinatorial Nullstellensatz since $|P(v_i)| = |L(v_i)| > t_i$ for each $i \in [n]$, there is a $(p_1, p_2, \ldots, p_n) \in \prod_{i=1}^n P(v_i)$ such that $\hat{f}(p_1, p_2, \ldots, p_n) \neq 0$.  The result follows.
\end{proof}

Interestingly, the polynomials $f$ and $\hat{f}$ in the proof of Theorem~\ref{thm: null3} are considered in~\cite{WQ19} where the authors study the Alon-Tarsi Number and Modulo Alon-Tarsi Number of signed graphs.  We can use Theorem~\ref{thm: null3} to get a sufficient algebraic condition for a graph $G$ to satisfy $\chi_{DP}(G) \leq 3$.

\begin{customcor} {\bf \ref{cor: suff3}}
Suppose $G$ is a graph with $\chi_{DP}(G) \geq 2$ and $V(G) = \{v_1, \ldots, v_n \}$.  Let $\mathcal{F} \subseteq \mathbb{F}_3[x_1, \ldots, x_n]$ be the set of at most $2^{|E(G)|}$  polynomials given by:
$$\mathcal{F} = \left \{ \prod_{v_iv_j \in E(G), \; j>i} (x_i + b_{i,j} x_j) : \text{each}\;b_{i,j} \in \{-1, 1 \} \right \}.$$
If for each $f \in \mathcal{F}$ there exists $(t_1, t_2, \ldots, t_n) \in \prod_{i=1}^n \{0,1,2\}$ such that $[ \prod_{i=1}^n x_i^{t_i}]_f \neq 0$, then $\chi_{DP}(G) \leq 3$.
\end{customcor}

\begin{proof}
Suppose that $\mathcal{H} = (L,H)$ is an arbitrary 3-fold cover of $G$ with the vertices of $H$ arbitrarily named so that $L(v) \subseteq \{(v,j) : j \in \mathbb{F}_3 \}$ for each $v \in V(G)$.  Let $f(x_1, x_2, \ldots, x_n) \in \mathbb{F}_3[x_1, \ldots, x_n]$ be the polynomial given by
$$f(x_1, x_2, \ldots, x_n) = \prod_{v_iv_j \in E(G), \; j>i} (x_i + B(\sigma_{v_iv_j}^{\mathcal{H}}) x_j).$$
Clearly, $f \in \mathcal{F}$.  So, there exists $(t_1, t_2, \ldots, t_n) \in \prod_{i=1}^n \{0,1,2\}$ such that $[ \prod_{i=1}^n x_i^{t_i}]_f \neq 0$.  Since $|L(v_i)| = 3 > t_i$ for each $i \in [n]$, there is an $\mathcal{H}$-coloring of $G$ by Theorem~\ref{thm: null3}.  Since $\mathcal{H}$ was arbitrary, we have that $\chi_{DP}(G) \leq 3$.
\end{proof}


Using the idea of natural bijections (as discussed in the previous Section), it is possible to simplify Corollary~\ref{cor: suff3} a bit by reducing the number of polynomials that need to be checked.

\begin{cor} \label{cor: suff32}
Suppose $G$ is a connected graph containing a cycle with $V(G) = \{v_1, \ldots, v_n \}$, and $T$ is a spanning tree of $G$.  Let $\mathcal{F}  \subseteq \mathbb{F}_3[x_1, \ldots, x_n]$ be the set of at most $2^{|E(G)|-|V(G)|+1}$  polynomials given by:
$$\mathcal{F} = \left \{ \left [ \prod_{v_iv_j \in E(T), \; j>i} (x_i - x_j) \right] \left[\prod_{v_iv_j \in E(G)-E(T), \; j>i} (x_i + b_{i,j} x_j) \right] : \text{each}\;b_{i,j} \in \{-1, 1 \} \right \}.$$
If for each $f \in \mathcal{F}$ there exists $(t_1, t_2, \ldots, t_n) \in \prod_{i=1}^n \{0,1,2\}$ such that $[ \prod_{i=1}^n x_i^{t_i}]_f \neq 0$, then $\chi_{DP}(G) \leq 3$.
\end{cor}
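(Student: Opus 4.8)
The plan is to reduce Corollary~\ref{cor: suff32} to Corollary~\ref{cor: suff3} by showing that for an \emph{arbitrary} $3$-fold cover $\mathcal{H} = (L,H)$ of $G$, we may rename the vertices of $H$ so that every saturation function along a \emph{tree} edge becomes good (i.e.\ $B(\sigma_{v_iv_j}^{\mathcal{H}}) = -1$ for every $v_iv_j \in E(T)$). Once this is achieved, the polynomial $f$ produced in the proof of Corollary~\ref{cor: suff3} for this relabeled cover automatically lies in the smaller family $\mathcal{F}$ of this corollary, because its tree-edge factors are exactly $(x_i - x_j)$ and only the non-tree-edge factors retain the ambiguous sign $b_{i,j} \in \{-1,1\}$. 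The hypothesis then supplies a monomial with nonzero coefficient, and Theorem~\ref{thm: null3} finishes.

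First I would fix an arbitrary $3$-fold cover $\mathcal{H}$ and root the spanning tree $T$ at $v_1$. Processing the vertices of $T$ in a breadth-first (or any top-down) order, I would assign names to the three vertices of each $L(v)$ one vertex at a time: having already named the parent $v_p$ of $v$ in $T$, the edge $v_pv$ contributes a matching $E_H(L(v_p),L(v))$ which, since $|L(v_p)|=|L(v)|=3$ and we may assume each cross-matching is as large as possible, is a perfect matching on three pairs. I would name the elements of $L(v)$ so that each matched pair shares the same second coordinate, forcing $\sigma_{v_pv}^{\mathcal{H}}$ to have fixed difference $\beta = 0$ and hence be good. Because $T$ is a tree, each vertex other than the root has a unique parent, so this naming is consistent and never over-constrains any $L(v)$; this is precisely the natural-bijection mechanism of Proposition~\ref{pro: treeDP} applied to $T$. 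The key structural point to verify is that renaming the vertices of $L(v)$ only permutes second coordinates and therefore does not alter \emph{whether} any other saturation function is good or bad --- it may flip a fixed-difference value or a fixed-sum value, but goodness of non-tree edges is preserved as a binary property (a permutation of second coordinates sends a matching with constant difference to one with constant difference, and likewise for constant sum by Observation~\ref{obs: crucial}).

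With the relabeling in hand, the proof of Corollary~\ref{cor: suff3} goes through verbatim: the polynomial
$$f(x_1, \ldots, x_n) = \prod_{v_iv_j \in E(G), \; j > i} \left(x_i + B(\sigma_{v_iv_j}^{\mathcal{H}})\, x_j\right)$$
now has $B(\sigma_{v_iv_j}^{\mathcal{H}}) = -1$ for every tree edge, so $f$ factors as the product of $\prod_{E(T)}(x_i - x_j)$ with $\prod_{E(G)-E(T)}(x_i + B \, x_j)$, which is exactly an element of the restricted family $\mathcal{F}$ defined in this corollary. By hypothesis there is $(t_1, \ldots, t_n) \in \prod_i \{0,1,2\}$ with $[\prod_i x_i^{t_i}]_f \neq 0$, and since $|L(v_i)| = 3 > t_i$ for every $i$, Theorem~\ref{thm: null3} yields an $\mathcal{H}$-coloring of $G$. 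As $\mathcal{H}$ was an arbitrary $3$-fold cover, $\chi_{DP}(G) \leq 3$. The bound $|E(G)| - |V(G)| + 1$ on $|\mathcal{F}|$ is then just the cyclomatic number of $G$ (the count of non-tree edges), since the tree edges no longer contribute a sign choice.

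The main obstacle I anticipate is the consistency of the relabeling step: one must be sure that naming $L(v)$ to make the \emph{parent} edge good does not conflict with any constraint and that the guarantee really does propagate through the whole tree. This is exactly where the hypothesis that $G$ is connected and $T$ is a \emph{spanning} tree is essential --- it ensures every vertex is reached and has a well-defined unique parent, so the naming procedure terminates having freely fixed each $L(v)$ exactly once. The non-tree edges, which close cycles, are the ones whose saturation functions we cannot control, and that is precisely why their sign $b_{i,j}$ must still range over $\{-1,1\}$ in the definition of $\mathcal{F}$.
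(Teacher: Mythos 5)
Your proposal is correct and takes essentially the same route as the paper: the paper likewise assumes every cross-matching is full, deletes the non-tree matchings to obtain a $3$-fold cover of $T$, and invokes Proposition~\ref{pro: treeDP} to rename the vertices of $H$ so that every tree-edge saturation function is good (your BFS naming from a root is just a hands-on rederivation of that proposition), after which the resulting polynomial lies in the restricted family $\mathcal{F}$ and Theorem~\ref{thm: null3} finishes exactly as you say. One small caveat: your parenthetical claim that renaming preserves goodness of the non-tree edges is false --- composing a constant-difference matching with a relabeling of the form $x \mapsto \beta - x$ in $\mathbb{F}_3$ produces a constant-sum (bad) one --- but this does not damage the argument, since $\mathcal{F}$ ranges over both signs $b_{i,j} \in \{-1,1\}$ on non-tree edges and Theorem~\ref{thm: null3} applies under arbitrary naming, so only the goodness of the tree edges needs to be arranged.
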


\begin{proof}
Suppose that $\mathcal{H} = (L,H)$ is an arbitrary 3-fold cover of $G$.  We may assume that $|E_H(L(v_i),L(v_j))|=3$ whenever $v_iv_j \in E(G)$.  Let $H' = H - \bigcup_{v_iv_j \in E(G)-E(T)} E_H(L(v_i),L(v_j))$ and  $\mathcal{H}' = (L,H')$.  Notice that $\mathcal{H}'$ is a 3-fold cover of $T$.  By Proposition~\ref{pro: treeDP}, it is possible to assign names to the vertices of $H$ such that $L(v) = \{(v,j) : j \in \mathbb{F}_3 \}$ for each $v \in V(G)$ and there is a natural bijection between the proper $3$-colorings of $T$ and the $\mathcal{H}'$-colorings of $T$.   Let $f(x_1, \ldots, x_n)  \in \mathbb{F}_3[x_1, \ldots, x_n]$ be the polynomial given by
$$f(x_1, \ldots, x_n) = \prod_{v_iv_j \in E(G), \; j>i} (x_i + B(\sigma_{v_iv_j}^{\mathcal{H}}) x_j).$$
Since $\sigma_{v_iv_j}^{\mathcal{H}}$ is good whenever $v_iv_j \in E(T)$, $f \in \mathcal{F}$.  So, there exists $(t_1, t_2, \ldots, t_n) \in \prod_{i=1}^n \{0,1,2\}$ such that $[ \prod_{i=1}^n x_i^{t_i}]_f \neq 0$.  Since $|L(v_i)| = 3 > t_i$ for each $i \in [n]$, there is an $\mathcal{H}$-coloring of $G$ by Theorem~\ref{thm: null3}.  Since $\mathcal{H}$ was arbitrary, we have that $\chi_{DP}(G) \leq 3$.
\end{proof}

\subsection{Some Applications}

\begin{pro} \label{pro: appbipartite}
$\chi_{DP}(K_{4,4} - \{e_1,e_2\})=3$, where $\{e_1, e_2\}$ is a matching of size two in $K_{4,4}$.
\end{pro}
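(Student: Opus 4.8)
The plan is to establish $\chi_{DP}(K_{4,4} - \{e_1,e_2\}) = 3$ in two directions. For the lower bound, since $K_{4,4} - \{e_1,e_2\}$ contains cycles, it is not a tree, so $\chi_{DP} \geq 2$; to rule out $\chi_{DP} = 2$ I would exhibit an explicit bad 2-fold cover with no coloring, or more simply note that the graph contains an even cycle $C_4$ (it still has many 4-cycles after deleting a matching of size two), and since $\chi_{DP}(C_4) = 3$ and DP-chromatic number is monotone under taking subgraphs, we get $\chi_{DP}(K_{4,4} - \{e_1,e_2\}) \geq 3$. This gives the easy half immediately.

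For the upper bound $\chi_{DP} \leq 3$, the natural approach is to apply Corollary~\ref{cor: suff32}. Let $G = K_{4,4} - \{e_1,e_2\}$ with $|V(G)| = 8$ and $|E(G)| = 16 - 2 = 14$, so $|E(G)| - |V(G)| + 1 = 7$. Fixing a spanning tree $T$, the corollary reduces the problem to checking at most $2^{7} = 128$ polynomials of the form
$$f = \left[\prod_{v_iv_j \in E(T), \; j>i} (x_i - x_j)\right]\left[\prod_{v_iv_j \in E(G)-E(T), \; j>i} (x_i + b_{i,j} x_j)\right],$$
and for each such $f$ verifying that some coefficient $[\prod_{i=1}^8 x_i^{t_i}]_f$ with each $t_i \in \{0,1,2\}$ is nonzero in $\mathbb{F}_3$. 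The strategy is to choose the spanning tree and vertex ordering cleverly so that these coefficient computations become tractable, ideally organizing the seven non-tree edges so that many sign patterns collapse by symmetry (the deleted matching leaves the graph with substantial automorphism structure, since any two size-two matchings in $K_{4,4}$ are equivalent up to automorphism).

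The main obstacle I expect is managing the combinatorial explosion of the $2^7$ coefficient checks while keeping the argument human-readable. Rather than brute-forcing all $128$ polynomials, I would look for a uniform argument: exploit the bipartite structure (with parts $X, Y$ of size $4$) so that the tree factors contribute a predictable monomial, and analyze how the sign choices $b_{i,j}$ on the seven back-edges affect a single target monomial $\prod x_i^{t_i}$. Concretely, I would try to fix one target exponent vector $(t_1,\dots,t_8)$ — guided by an Alon--Tarsi-type orientation of $G$ with out-degrees at most $2$ — and show its coefficient is $\pm 1 \pmod 3$ (hence nonzero) regardless of the signs, using the fact that over $\mathbb{F}_3$ the factors $x_i \pm x_j$ differ only by the sign of the $x_j$ term and this sign is absorbed into the coefficient's overall value. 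The delicate point will be confirming nonvanishing modulo $3$ rather than merely over $\mathbb{Z}$, since integer coefficients that happen to be multiples of $3$ would break the argument; careful bookkeeping of which orientations of $G$ yield $\mathrm{diff}(D) \not\equiv 0 \pmod 3$ will be essential, and this is precisely where a direct (possibly computer-assisted) verification of the reduced set of polynomials may be the cleanest route to close the proof.
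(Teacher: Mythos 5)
Your proposal is correct and follows the paper's overall strategy (lower bound from a $C_4$ subgraph plus monotonicity of DP-colorability; upper bound from the algebraic sufficient condition verified by computer), but you invoke a different key lemma for the upper bound: the paper applies Corollary~\ref{cor: suff3} directly and reports a computer check of all $2^{14} = 16384$ polynomials in $\mathcal{F}$, whereas you apply the spanning-tree refinement Corollary~\ref{cor: suff32}, cutting the check to $2^{14-8+1} = 128$ polynomials. Your route is strictly more economical and is exactly the kind of application Corollary~\ref{cor: suff32} was designed for; moreover, since your reduced family is the subfamily of the paper's $\mathcal{F}$ in which every tree edge carries the sign $b_{i,j} = -1$, the paper's reported computation already certifies that your $128$ checks succeed. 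One caveat: your hoped-for uniform argument---fixing a single exponent vector $(t_1, \ldots, t_8)$ and claiming its coefficient stays nonzero for all sign patterns because the signs are ``absorbed'' into the overall value---does not work as stated. For a fixed monomial the coefficient is a signed sum over orientations of $G$ with out-degree sequence $(t_1, \ldots, t_8)$, and flipping a single $b_{i,j}$ flips the sign of exactly those contributions whose orientation selects $x_j$ from that factor, so different sign patterns can (and in general do) require different witness monomials; you rightly hedge on this point, and it is your fallback to direct verification of the $128$ reduced polynomials that actually closes the proof, just as the larger computation does in the paper.
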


\begin{proof}
Since $G'= K_{4,4} - \{e_1,e_2\}$ contains a copy of $C_4$, we have that $\chi_{DP}(G') > 2$.  Suppose $V(G') = \{v_1, \ldots, v_8 \}$.  Using a computer to analyze the 16384 polynomials over $\mathbb{F}_3$ contained in
$$\mathcal{F} = \left \{ \prod_{v_iv_j \in E(G'), \; j>i} (x_i + b_{i,j} x_j) : \text{each}\;b_{i,j} \in \{-1, 1 \} \right \},$$
we see that the hypotheses of Corollary~\ref{cor: suff3} are satisfied.  Thus, $\chi_{DP}(G') = 3$.
\end{proof}

The converse of Corollary~\ref{cor: suff3} does not hold.  We now present an example to show this.  It is known that $\chi_{DP}(K_{3,t})=3$ if and only if $t=2,3,4,5$ (\cite{M18}).  Suppose that $G$ is a copy of $K_{3,5}$ with partite sets $\{v_1, v_2, v_3\}$ and $\{v_4, v_5, v_6, v_7, v_8\}$.  View the polynomial $f_G$ as an element of $\mathbb{F}_3[x_1, \ldots, x_8]$.  We know $f_G$ is homogenous of degree 15.  Let $T$ be the set containing the 8 elements in $\prod_{i=1}^8 \{0,1,2\}$ with coordinates summing to 15.  It is easy to compute that for each $(t_1, t_2, \ldots, t_8) \in T$, $[ \prod_{i=1}^8 x_i^{t_i}]_{f_G} = 0$.  Notice that this example also shows that the converses of Theorems~\ref{thm: null} and~\ref{thm: null3} do not hold.

Even though the converse of Corollary~\ref{cor: suff3} does not hold, when the hypotheses of Corollary~\ref{cor: suff3} are not satisfied, we can still use that information to give ourselves ``clues" about how one might construct a 3-fold cover for which there is no coloring.

As an example suppose that $G = C_6^2$ where the vertices of $G$ in cyclic order are: $v_1, v_2, \ldots, v_6$.  Consider the following polynomials over $\mathbb{F}_3$:
\begin{align*}
&f_1(x_1, x_2, \ldots, x_6) = f_G(x_1, x_2, \ldots, x_6) \; \; \text{and} \\
&f_2(x_1, x_2, \ldots, x_6) = (x_1+x_2)(x_1+x_3) \prod_{v_iv_j \in E(G)-\{v_1v_2, v_1v_3\}, \; j>i} (x_i - x_j).
\end{align*}
It is not hard to compute: $[ \prod_{i=1}^6 x_i^{2}]_{f_1} = 0$ and $[ \prod_{i=1}^6 x_i^{2}]_{f_2} = 1$.  So, Theorem~\ref{thm: null3} tells us that if $\mathcal{H} = (L, H)$ is a 3-fold cover for $G$ with vertices of $H$ named so that $\sigma_{v_iv_j}^{\mathcal{H}}$ is bad when $(i,j)=(1,2)$ and when $(i,j)=(1,3)$ and good otherwise, then there is an $\mathcal{H}$-coloring of $G$.  On the other hand, there \emph{may} exist a 3-fold cover for $G$, $\mathcal{H}$, that is a good prime cover of order 3 for which there is no $\mathcal{H}$-coloring of $G$.

This second observation gives us an idea about what the proof of the following Proposition might look like.

\begin{pro} \label{pro: appcyclesquare}
$\chi_{DP}(C_{3k}^2) > 3$ for any $k \geq 2$.
\end{pro}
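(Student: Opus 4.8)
The plan is to construct, for each $k \ge 2$, an explicit good prime $3$-fold cover $\mathcal{H}=(L,H)$ of $G = C_{3k}^2$ that admits no $\mathcal{H}$-coloring; by the definition of the DP-chromatic number this yields $\chi_{DP}(G) > 3$. Label the cyclic vertices $v_1, \dots, v_{3k}$, so that the edges of $G$ are the \emph{short} edges $v_iv_{i+1}$ and the \emph{long} edges $v_iv_{i+2}$ (indices mod $3k$), and note that for $k\ge 2$ these $6k$ edges are pairwise distinct. I will name $L(v_i)=\{(v_i,j):j\in\mathbb{F}_3\}$ and take every matching to be a translation, so that $\mathcal{H}$ is automatically good: a matching with parameter $\beta$ joins $(v_i,a)$ to $(v_j,a-\beta)$, and hence an $\mathcal{H}$-coloring is exactly a tuple $(p_1,\dots,p_{3k})\in\mathbb{F}_3^{3k}$ avoiding the forbidden difference $p_i-p_j=\beta_{ij}$ on each edge $v_iv_j$ (with $i<j$).

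The enabling reformulation is to pass to the consecutive differences $d_i = p_i - p_{i+1}$ for $i\in\{1,\dots,3k\}$. Then $p_i-p_{i+2}=d_i+d_{i+1}$, and telescoping around the cycle forces $\sum_{i=1}^{3k} d_i = 0$; conversely every $(d_i)$ with zero sum arises from some $p$, unique up to a global additive shift. Thus $\mathcal{H}$-colorings correspond (up to translation) to sequences $(d_i)\in\mathbb{F}_3^{3k}$ satisfying $\sum_i d_i=0$, $d_i\neq s_i$ for each short edge, and $d_i+d_{i+1}\neq \ell_i$ for each long edge, where $s_i,\ell_i\in\mathbb{F}_3$ are the forbidden values I am free to choose. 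I would set $s_i=0$ for all $i$, forcing each $d_i\in\{1,2\}$, and set $\ell_i=0$ on every long edge except the two edges $v_1v_3$ and $v_2v_4$, where I take $\ell_1=2$ (on $v_1v_3$) and $\ell_2=1$ (on $v_2v_4$). Over $d_i\in\{1,2\}$ the constraint $d_i+d_{i+1}\neq 0$ is equivalent to $d_i=d_{i+1}$, so the $3k-2$ ordinary long edges force $(d_i)$ to be constant on each of the two arcs cut out by the two twisted edges; these arcs are the singleton $\{d_2\}$ (call its value $\alpha$) and $\{d_1,d_3,d_4,\dots,d_{3k}\}$ of length $3k-1$ (call its value $\gamma$). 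The constraints $\ell_1=2$ and $\ell_2=1$ then forbid $\alpha=\gamma=1$ and $\alpha=\gamma=2$ respectively, leaving only $(\alpha,\gamma)\in\{(1,2),(2,1)\}$.

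The last step is a parity-type count. Writing $t$ for the number of indices with $d_i=2$, one has $\sum_i d_i \equiv t \pmod 3$, so a genuine coloring requires $t\equiv 0 \pmod 3$. But the two surviving sequences give $t=3k-1\equiv 2$ when $(\alpha,\gamma)=(1,2)$ and $t=1\equiv 1$ when $(\alpha,\gamma)=(2,1)$, and neither is divisible by $3$. Hence no difference sequence meets all of the constraints together with $\sum_i d_i=0$, so $\mathcal{H}$ has no $\mathcal{H}$-coloring and $\chi_{DP}(C_{3k}^2)>3$.

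I expect the main obstacle to be the design of the cover rather than its verification: one must twist exactly two long edges, and in \emph{opposite} directions ($\ell=2$ versus $\ell=1$), so that the remaining long edges collapse the difference sequence to a tiny list of candidates whose cyclic sums $\sum_i d_i$ are pinned to nonzero residues mod $3$. This mismatch with the forced closure condition $\sum_i d_i=0$ is exactly the obstruction that destroys every coloring, and it is the combinatorial shadow of the vanishing coefficient $[\prod_i x_i^2]_{f_G}=0$ observed just before the statement. Two routine points also warrant checking: that the short and long edges are genuinely distinct (which is where $k\ge 2$ enters, since $C_3^2=K_3$ fails this), and that, because proper $3$-colorings of $C_{3k}^2$ do exist, the twisted cover $\mathcal{H}$ is not equivalent to the all-zero (proper-coloring) cover.
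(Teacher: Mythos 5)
Your proof is correct and takes essentially the same approach as the paper: both arguments exhibit an explicit good $3$-fold cover of $C_{3k}^2$ whose matchings are all translations, with all but two being the identity, and then show that the $3$-periodicity forced on any coloring by the identity matchings is incompatible with the two twisted matchings (the paper twists the edges $v_{3k-2}v_{3k}$ and $v_{3k-1}v_{3k}$ by a shift of $+1$ and chases the forced colors around the cycle, while you twist $v_1v_3$ and $v_2v_4$ in opposite directions and encode the same forcing as the zero-sum condition $\sum_i d_i = 0$ on the differences $d_i = p_i - p_{i+1}$). Your difference-sequence bookkeeping is a tidy but equivalent repackaging of the paper's direct argument, and all of your verifications (the equivalence $d_i + d_{i+1} \neq 0 \Leftrightarrow d_i = d_{i+1}$ over $\{1,2\}$, the reduction to $(\alpha,\gamma) \in \{(1,2),(2,1)\}$, and the residues $t \equiv 2, 1 \pmod 3$) check out.
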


\begin{proof}
Let $G = C_{3k}^2$, and denote the vertices of $G$ in cyclic order as: $v_1, v_2, \ldots, v_{3k}$.  We will prove the desired result by constructing a 3-fold cover of $G$ for which there is no coloring.  For each $i \in [3k]$, let $L(v_i) = \{(v_i,t) : t \in \mathbb{F}_3 \}$.  Then, let $H$ be the graph with $V(H) = \bigcup_{i=1}^{3k} L(v_i)$.  We draw the edges of $H$ as follows: draw edges so that $H[L(v_i)]$ is a complete graph for each $i \in [3k]$ and for each $v_iv_j \in E(G) - \{v_{3k-2}v_{3k}, v_{3k-1}v_{3k} \}$ draw an edge between $(v_i, t)$ and $(v_j,t)$ for each $t \in \mathbb{F}_3$.  Finally, for $j = 3k-2, 3k-1$ and each $t \in \mathbb{F}_3$, draw an edge between $(v_j, t)$ and $(v_{3k}, t+1)$ where addition is performed in $\mathbb{F}_3$.

Now, $\mathcal{H} = (L,H)$ is clearly a 3-fold cover of $G$.  We claim that there is no $\mathcal{H}$-coloring of $G$.  For the sake of contradiction, suppose that $I$ is an $\mathcal{H}$-coloring of $G$.  We know that there exist $r, q \in \mathbb{F}_3$ with $r \neq q$ such that $(v_1, q), (v_2,r) \in I$.  Suppose that $l$ is the element in $\mathbb{F}_3 - \{q, r \}$.  The edges in $E_H(L(v_1), L(v_{3k})) \cup E_H(L(v_2), L(v_{3k}))$ imply that $(v_{3k},l) \in I$.  By observing the edges in the induced subgraph $H[\bigcup_{i=1}^{3k-1} L(v_i)]$, we see that $(v_{3k-1},q), (v_{3k-2},r) \in I$.  It must be that $l = q+1$ or $l = r+1$.  So, $(v_{3k},l)$ is adjacent to $(v_{3k-1},q)$ or $(v_{3k-2},r)$.  This contradicts the fact that $I$ is an $\mathcal{H}$-coloring of $G$.
\end{proof}

Note that in the proof above $\mathcal{H} = (L,H)$ is a 3-fold cover of $G$ such that $\sigma_{v_iv_j}^{\mathcal{H}}$ is good for all $v_iv_j \in E(G)$.  It is also worth mentioning that $\chi_{\ell}(C_{3k}^2)=3$ can be proven via the Alon-Tarsi Theorem~\cite{JM98}.  It is also known that $\chi_{\ell}(C_{n}^2) = 4$ whenever $n \geq 6$ and $n$ is not divisible by 3~\cite{PW03}.  This along with Proposition~\ref{pro: appcyclesquare} and the fact that $\chi_{DP}(G) \leq \Delta(G)$ provided that $G$ is neither a cycle nor a complete graph, immediately allows us to give the DP-chromatic number of all cycle squares.

\begin{cor} \label{cor: cyclesquare}
$\chi_{DP}(C_3^2)=3$, $\chi_{DP}(C_5^2) = 5$, and $\chi_{DP}(C_n^2)=4$ whenever $n \in \{4, 6, 7, 8, \ldots \}$.
\end{cor}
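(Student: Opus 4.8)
The plan is to establish each of the three claimed values by sandwiching $\chi_{DP}(C_n^2)$ between matching upper and lower bounds, disposing of the three small ``complete'' cases separately and then treating all $n \ge 6$ uniformly for the upper bound while splitting the lower bound according to whether $3$ divides $n$.

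First I would handle the small cases. For $n \in \{3,4,5\}$ the cycle $C_n$ has diameter at most $2$, so every pair of vertices lies within distance $2$ and hence $C_n^2 = K_n$. Since $\chi(K_m) = \col(K_m) = m$, the chain $\chi(G) \le \chi_\ell(G) \le \chi_{DP}(G) \le \col(G)$ pins $\chi_{DP}(K_m) = m$; this yields $\chi_{DP}(C_3^2) = 3$, $\chi_{DP}(C_4^2) = 4$, and $\chi_{DP}(C_5^2) = 5$.

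Next I would establish the upper bound for $n \ge 6$. Here each vertex $v_i$ is adjacent to exactly the four distinct vertices $v_{i\pm 1}, v_{i\pm 2}$ (indices taken mod $n$), so $C_n^2$ is $4$-regular; being $4$-regular on $n \ge 6$ vertices, it is neither a cycle nor complete (a $4$-regular complete graph is $K_5$). The bound $\chi_{DP}(G) \le \Delta(G)$ for graphs that are neither cycles nor complete, due to Dvo\v{r}\'{a}k and Postle, then gives $\chi_{DP}(C_n^2) \le 4$ for all $n \ge 6$.

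It remains to prove the matching lower bound $\chi_{DP}(C_n^2) \ge 4$ for $n \ge 6$, which I would split into two subcases. If $3 \mid n$, write $n = 3k$ with $k \ge 2$; Proposition~\ref{pro: appcyclesquare} gives $\chi_{DP}(C_{3k}^2) > 3$, hence $\ge 4$. If instead $3 \nmid n$ with $n \ge 6$, then $\chi_\ell(C_n^2) = 4$ by \cite{PW03}, and $\chi_\ell(G) \le \chi_{DP}(G)$ forces $\chi_{DP}(C_n^2) \ge 4$. In both subcases the lower bound meets the upper bound, so $\chi_{DP}(C_n^2) = 4$ for every $n \ge 6$, which together with the small cases completes the proof. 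The only substantive input is the lower bound in the case $3 \mid n$, and that is precisely Proposition~\ref{pro: appcyclesquare}; the remainder is bookkeeping resting on the cited list-coloring result and the Dvo\v{r}\'{a}k--Postle inequality.
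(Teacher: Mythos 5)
Your proposal is correct and takes essentially the same approach as the paper: the small cases via $C_n^2 = K_n$ for $n \in \{3,4,5\}$, the upper bound $\chi_{DP}(C_n^2) \leq \Delta(C_n^2) = 4$ for $n \geq 6$ from the Dvo\v{r}\'{a}k--Postle bound for graphs that are neither cycles nor complete, and the lower bound split between Proposition~\ref{pro: appcyclesquare} when $3 \mid n$ and $\chi_{\ell}(C_n^2) = 4$ from~\cite{PW03} together with $\chi_\ell(G) \leq \chi_{DP}(G)$ when $3 \nmid n$. You merely make explicit the bookkeeping that the paper leaves implicit in the paragraph preceding the corollary.
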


\noindent {\bf Acknowledgment.} We are thankful to Xuding Zhu for his encouragement and helpful comments.


\begin{thebibliography}{99}
{\small

\bibitem{AM06} S. Akbari, V. S. Mirrokni, B. S. Sadjad, A relation between choosability and uniquely list colorability, \emph{Journal of Combinatorial Theory, Series B} 96 (2006) 577-583.

\bibitem{A99} N. Alon, Combinatorial nullstellensatz, \emph{Combinatorics, Probability, and Computing} 8 (1999), 7-29.

\bibitem{AT92} N. Alon and M. Tarsi, Colorings and orientations of graphs, \emph{Combinatorica} 12 (1992), 125-134.

\bibitem{B16} A. Bernshteyn, The asymptotic behavior of the correspondence chromatic number, \emph{Discrete Mathematics}, 339 (2016), 2680-2692.

\bibitem{B17} A. Bernshteyn, The Johansson-Molloy Theorem for DP-coloring, \emph{Random Structures \& Algorithms} 54(4) (2019), 653-664.

\bibitem{BK17} A. Bernshteyn and A. Kostochka, On differences between DP-coloring and list coloring, \emph{Siberian Advances in Mathematics} 21:2 (2018),  61-71.

\bibitem{BK182} A. Bernshteyn and A. Kostochka, Sharp Dirac's theorem for DP-critical graphs, \emph{Journal of Graph Theory} 88 (2018), 521-546.

\bibitem{BK19} A. Bernshteyn and A. Kostochka, DP-colorings of hypergraphs, \emph{European Journal of Combinatorics} 78 (2019), 134-146.

\bibitem{BK18} A. Bernshteyn, A. Kostochka, and X. Zhu, DP-colorings of graphs with high chromatic number, \emph{European J. of Comb.} 65 (2017), 122-129.

\bibitem{BK172} A. Bernshteyn, A. Kostochka, and S. Pron, On DP-coloring of graphs and multigraphs, \emph{Siberian Mathematical Journal} 58 (2017), 28-36.

\bibitem{DP15} Z. Dvo\v{r}\'{a}k and L. Postle, Correspondence coloring and its application to list-coloring planar graphs wihtout cycles of lengths 4 to 8, \emph{J. Combinatorial Theory Series B} 129 (2018), 38-54.

\bibitem{ET79} P. Erd\H{o}s, A. L. Rubin, and H. Taylor, Choosability in graphs, \emph{Congressus Numerantium} 26 (1979), 125-127.

\bibitem{FS92} H. Fleischner, M. Stiebitz, A solution to a coloring problem of P. Erd\H{o}s, \emph{Discrete Mathematics} 101(1992), 39-48.

\bibitem{H11} D. Hefetz, On two generalizations of the Alon-Tarsi polynomial method, \emph{J. Combin. Theory Ser. B} 101 (2011), no. 6, 403-414.

\bibitem{JT95} T. Jensen, B. Toft, Graph Coloring Problems, Wiley, New York, 1995.

\bibitem{JW19} L. Jin, T.-L. Wong, X. Zhu, Colouring of $S$-labeled planar graphs, arXiv: 1811.08584 (preprint), 2019.

\bibitem{JM98} M. Juvan, B. Mohar, and R. \v Skrekovski, List total colourings of graphs, \emph{Combinatorics, Probability and Computing} 7 (1998), 181-188.

\bibitem{KM17} H. Kaul and J. Mudrock, Criticality, the list color function, and list coloring the Cartesian product of graphs, arXiv: 1803.09141 (preprint), 2018.

\bibitem{KM18} H. Kaul and J. Mudrock, On the Alon-Tarsi number and chromatic choosability of Cartesian products of graphs, \emph{Electronic Journal of Combinatorics} 26(1) (2019), \#P1.3.

\bibitem{KM19} H. Kaul and J. Mudrock, On the chromatic polynomial and counting DP-colorings of graphs, to appear in \emph{Advances in Applied Mathematics}, 2020.

\bibitem{KO18} S-J. Kim and K. Ozeki, A note on a Brooks' type theorem for DP-coloring, \emph{Journal of Graph Theory} 91(2) (2018), 148-161.

\bibitem{KO182} S-J. Kim and K. Ozeki, A sufficient condition for DP-4-colorability, \emph{Discrete Mathematics} 341 (2018), 1983-1986.

\bibitem{KY17} S-J. Kim and X. Yu, Planar graphs without 4-cycles adjacent to triangles are DP-4-colorable, \emph{Graphs and Combinatorics} 35 (2019), 707-718.

\bibitem{LL19} R. Liu and X. Li., Every planar graph without 4-cycles adjacent to two triangles is DP-4-colorable, \emph{Discrete Mathematics} 342 (2019), 623-627.

\bibitem{LLYY19} R. Liu, S. Loeb, Y. Yin, and G. Yu, DP-3-coloring of some planar graphs, \emph{Discrete Mathematics} 342 (2019), 178-189.

\bibitem{Mo18} M. Molloy, Asymptotically good edge correspondence colouring, arXiv: 1808.08594 (preprint), 2018.

\bibitem{M18} J. Mudrock, A note on the DP-chromatic number of complete bipartite graphs, \emph{Discrete Mathematics} 341 (2018) 3148-3151.

\bibitem{PW03} A. Prowse and D. Woodall, Choosability of Powers of Circuits, \emph{Graphs and Combinatorics} 19 (2003), 137-144.

\bibitem{S12} U. Schauz, Algebraically solvable problems: describing polynomials as equivalent to explicit solutions, \emph{Electronic Journal of Combinatorics} 15 (2008), no. 1, R10.

\bibitem{S10} U. Schauz, Flexible lists in Alon and Tarsi's theorem, and time scheduling with unreliable participants, \emph{Electronic Journal of Combinatorics} 17 (2010), no. 1, R13.

\bibitem{S14} U. Schauz, Proof of the list edge coloring conjecture for complete graphs of prime degree, \emph{Electronic Journal of Combinatorics} 21 (2014), no. 3, P3.43.

\bibitem{V76} V. G. Vizing, Coloring the vertices of a graph in prescribed colors, \emph{Diskret. Analiz.} no. 29, \emph{Metody Diskret. Anal. v Teorii Kodovi Skhem} 101 (1976), 3-10.

\bibitem{WQ19} W. Wang, J. Qian, and T. Abe, Alon-Tarsi number and modulo Alon-Tarsi number of signed graphs, \emph{Graphs and Combinatorics} 35 (2019), 1051-1064.

\bibitem{W01} D. B. West, (2001) \emph{Introduction to Graph Theory}.  Upper Saddle River, NJ: Prentice Hall.

\bibitem{Z17} X. Zhu, The Alon-Tarsi number of planar graphs, \emph{Journal of Combinatorial Theory Series B} 134 (2019), 354-358.

}

\end{thebibliography}
\end{document}